\def\zhou#1 {\fbox {\footnote {\ }}\ \footnotetext { From Zhou: {\color{red}#1}}}
\def\alex#1 {\fbox {\footnote {\ }}\ \footnotetext { From Alex: {\color{blue}#1}}}
\def\qi#1 {\fbox {\footnote {\ }}\ \footnotetext { From Qi: {\color{red}#1}}}
\newcommand{\bbZ}{{\mathbb Z}}
\newcommand{\tr}{{\rm Tr}}
\newcommand{\gf}{{\mathbb F}}
\newcommand{\bC}{{\mathbb C}}
\newcommand{\bQ}{{\mathbb Q}}
\newcommand{\aut}{{\rm Aut}}
\newtheorem{theorem}{Theorem}[section]
\newtheorem{lemma}[theorem]{Lemma}
\newtheorem{proposition}[theorem]{Proposition}
\newtheorem{corollary}[theorem]{Corollary}
\newtheorem{fact}{Fact}
\theoremstyle{definition}
\newtheorem{definition}[theorem]{Definition}
\theoremstyle{remark}
\newtheorem{remark}[theorem]{Remark}
\numberwithin{equation}{section}
\begin{document}

\title[Difference balanced functions]{Difference balanced functions and their generalized difference sets}

\author{Alexander Pott}
\address{Institute of Algebra and Geometry, Faculty of Mathematics, Otto-von-Guericke University Magdeburg, Universit\"atsplatz 2, 39106, Magdeburg, Germany}
\curraddr{}
\email{alexander.pott@ovgu.de}
\thanks{}

\author{Qi Wang} 
\address{Institute of Algebra and Geometry, Faculty of Mathematics, Otto-von-Guericke University Magdeburg, Universit\"atsplatz 2, 39106, Magdeburg, Germany}
\curraddr{Department of Computer Science and Engineering, The Hong Kong University of Science and Technology, Clear Water Bay, Kowloon, Hong Kong}
\email{qi.wang@ovgu.de}
\thanks{}

\subjclass[2010]{Primary 05B10; Secondary 05B30}
\keywords{difference balanced function, $d$-homogenerous function, generalized difference set, $p$-ary sequence with ideal autocorrelation, two-tuple balanced function}

\date{}

\dedicatory{}

\begin{abstract}
  Difference balanced functions from $\gf_{q^n}^*$ to $\gf_q$ are closely related to combinatorial designs and naturally define $p$-ary sequences with the ideal two-level autocorrelation. In the literature, all existing such functions are associated with the $d$-homogeneous property, and it was conjectured by Gong and Song that difference balanced functions must be $d$-homogeneous. First we characterize difference balanced functions by generalized difference sets with respect to two exceptional subgroups. We then derive several necessary and sufficient conditions for $d$-homogeneous difference balanced functions. In particular, we reveal an unexpected equivalence between the $d$-homogeneous property and multipliers of generalized difference sets. By determining these multipliers, we prove the Gong-Song conjecture for $q$ prime. Furthermore, we show that every difference balanced function must be balanced or an affine shift of a balanced function.
  

    
\end{abstract}

\maketitle

\section{Introduction}\label{sec-intro}

The interaction of the additive and multiplicative structure of finite fields plays an important role in constructing interesting functions and sequences in coding theory, cryptography, communications, etc. Difference balanced functions are such an example. They can generate $p$-ary sequences with the ideal two-level autocorrelation. Throughout this paper, let $q = p^m$ with $p$ an {\em odd} prime. A function $f: \gf_{q^n}^* \rightarrow \gf_q$ is said to be {\em balanced}, if 
$$
\left| \{ x \in \gf_{q^n}^* : f(x) = 0 \} \right| = q^{n-1} - 1,
$$
and 
$$
\left| \{ x \in \gf_{q^n}^* : f(x) = b \} \right| = q^{n-1} ,
$$
for each $b \in \gf_q^*$. 
A function $g: \gf_{q^n}^* \rightarrow \gf_q$ is called {\em difference balanced}, if $f(ax) - f(x)$ is balanced for each $a \in \gf_{q^n}^*\setminus \{1\}$. 
Since $\bbZ_{q^n - 1}$ is isomorphic to $\gf_{q^n}^*$, if $q = p$, we may naturally define a $p$-ary sequence ${\bf s}$ of period $p^n-1$ using a function $f: \gf_{p^n}^* \rightarrow \gf_p$ by $s_i := f(\theta^i)$, where $\theta$ is a primitive element of $\gf_{p^n}$. It was proved in~\cite{No04} that the function $f:
\gf_{p^n}^* \rightarrow \gf_p$ is difference balanced if and only if the sequence ${\bf s}$ has the ideal two-level autocorrelation as
$$
{\mathcal C}_{\bf s}(\tau) = \left\{\begin{array}{ll}
  p^n - 1 & \textrm{ if $\tau \equiv 0 \pmod{p^n - 1}$,} \\
  -1  & \textrm{ if $\tau \not\equiv 0 \pmod{p^n - 1}$,}
\end{array}\right.
$$
where
$$
{\mathcal C}_{\bf s}(\tau) = \sum_{0 \leq i < p^n - 1} \zeta_p^{s_{i+\tau} - s_i},
$$
and $\zeta_p$ is a $p$-th root of unity. In the literature, there are only a few constructions of difference balanced functions from $\gf_{q^n}^*$ to $\gf_q$:
\begin{itemize}
  \item[--] the trace function $ f(x) = \tr_{q^n/q}(x)$, where $\tr_{q^n/q}$ denotes the trace function from $\gf_{q^n}$ to $\gf_q$;
  \item[--] the Helleseth-Gong function~\cite{HKM01,HG02}: 
   $$
   f(x) = \tr_{q^n/q} \left( \sum_{i=0}^{\ell} u_i x^{(q^{2ki} + 1)/2} \right),
   $$
   where $n = (2\ell + 1)k$,  $b_i$'s are defined as $b_0 = 1$, $b_{ij} = (-1)^i$ for $1 \leq j \leq 2\ell$ an integer with $\gcd(j, 2\ell+1) = 1$, and $b_i  = b_{2\ell + 1 - i}$ for $i = 1, \ldots, \ell$, and $u_0 = b_0 /2 = (p+1)/2$, $u_i = b_{2i}$ for $i = 1, \ldots, \ell$ (all the indices of the $b_i$'s are taken modulo $2\ell + 1$); 
  \item[--] the Lin function~\cite{Lin98,Arasu10,Hu13} in characteristic $3$: 
    $$
    f(x) = \tr_{3^n/3} (x + x^e),
    $$
    where $n \geq 3$ is an odd integer and $e = 2\cdot 3^{(n-1)/2} + 1$;
  \item[--] the cascaded composition of the functions above using the Gordon-Mills-Welch method~\cite{GMW62} or the method by No~\cite{No04}.
\end{itemize}
A function $f: \gf_{q^n}^* \rightarrow \gf_q$ is called {\em $d$-homogeneous}, if there exists an integer $d$ with $\gcd(d, q - 1) = 1$ such that $f(a x) = a^d f(x)$ for all $x \in \gf_{q^n}^*$ and each $a \in \gf_q^*$. It is observed that all currently known difference balanced functions listed above are $d$-homogeneous. Using this observation, these difference balanced functions can be used to obtain cyclic difference sets or relative difference sets with Singer parameters~\cite{CX03,CX032,No04,NSH04,KNCH05} (for a recent survey, we refer to~\cite{Xiang05}). The two-tuple balance property (see Section~\ref{sec-twotuple} for the definition) of difference balanced functions was derived in~\cite{GG05} (see also~\cite{GS06}), and it was further conjectured by Gong and Song~\cite{GS06} that difference balanced functions from $\gf_{q^n}^*$ to $\gf_q$ must be $d$-homogeneous (an equivalent formulation in terms of the {\em cyclic array structure} was given in~\cite{GS06}). 


In this paper, we characterize difference balanced functions by certain generalized difference sets with two exceptional subgroups defined in~\cite{PWZ12} (see Section~\ref{sec-gds}), and then derive several necessary and sufficient conditions for $d$-homogeneous difference balanced functions from $\gf_{q^n}^*$ to $\gf_q$. Among these equivalent conditions, we observe that the $d$-homogeneous property of functions is equivalent to certain multipliers of
the corresponding generalized difference sets. By determining these multipliers explicitly, we are able to prove the conjecture by Gong and Song for $q$ prime. Moreover, we also show that every difference balanced function from $\gf_{q^n}^*$ to $\gf_q$ must be balanced or an affine shift of a balanced function.

The rest of the present paper is organized as follows. In Section~\ref{sec-pre}, generalized difference sets and some other objects related to difference balanced functions are introduced. In Section~\ref{sec-balance}, the balance property of difference balanced functions from $\gf_{q^n}^*$ to $\gf_q$ is derived in general. The links between $d$-homogeneous difference balanced functions and other combinatorial objects are revealed by giving necessary and sufficient conditions in Section~\ref{sec-equi}. In Section~\ref{sec-multiplier}, the Gong-Song conjecture is then proved for $q$ prime using certain numerical multipliers of the corresponding generalized difference sets. Finally, we conclude the paper with some open problems. 


\section{Preliminaries}\label{sec-pre}

In this section, we first introduce difference sets, relative difference sets, and generalized difference sets. Difference balanced functions from $\gf_{q^n}^*$ to $\gf_q$ can then be characterized by generalized difference sets with respect to two exceptional subgroups of orders $q$ and $q^n-1$. We also give the definition of two-tuple balanced functions, which are in fact equivalent to $d$-homogeneous difference balanced functions (we will explain this relation in Section~\ref{sec-equi}).

\subsection{Generalized difference sets}\label{sec-gds}

Let $G$ be an abelian group of order $v$ (written multiplicatively). Let $N_1, \ldots, N_r$ be subgroups of $G$ of order $n_1, \ldots, n_r$ and intersect pairwise trivially. A $k$-subset $D$ of $G$ is a $(v; n_1, \ldots, n_r; k, \lambda; \lambda_1, \ldots, \lambda_r )$ {\em generalized difference set} relative to the subgroups $N_i$, if the list of differences $d_1 d_2^{-1}$ with $d_1, d_2 \in D$ and $d_1 \ne d_2$, contains each element in $G \setminus (N_1 \cup N_2 \cup \cdots
N_r)$ exactly $\lambda$ times, and every non-identity element in $N_i$ exactly $\lambda_i$ times. The $N_i$'s are called {\em exceptional} subgroups. A generalized difference set $D$ is called {\em cyclic} (or {\em abelian}) if the group $G$ is cyclic (or abelian). We may identify a subset $D$ of a group $G$ with the group ring element $\sum_{d \in D} d$, which is also denoted by $D$ (by abuse of notation). Addition and multiplication in group rings are defined as:
$$
\sum_{g \in G} a_g g + \sum_{g \in G} b_g g =  \sum_{g \in G} (a_g + b_g) g,
$$
and
$$
\sum_{g \in G} a_g g  \sum_{g \in G} b_g g = \sum_{g \in G} \left( \sum_{h \in G} a_h b_{h^{-1}g} \right) g .
$$
We also define $D^{(t)} := \sum_{g \in G} d_g g^t$, for $D =
\sum_{g \in G} d_g g \in \bbZ[G]$ and $t$ an integer. In the language of group rings, in $\bbZ[G]$ we have the following characterization of (generalized) difference sets. 

\begin{proposition}\label{pro-gds}\cite{PWZ12}
The set $D$ is a generalized $(v; n_1, \ldots, n_r; k, \lambda; \lambda_1, \ldots, \lambda_r)$ difference set relative to the subgroups $N_i$ if and only if 
\begin{eqnarray*}
  D D^{(-1)} & = &  k  - (\lambda(1-r) + \lambda_1 + \cdots + \lambda_r) + \\
  & &  \quad +\  \lambda (G - N_1 - \cdots - N_r) + \lambda_1 N_1 + \cdots + \lambda_r N_r.
\end{eqnarray*}
\end{proposition}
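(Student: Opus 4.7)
The proof is essentially a careful translation of the combinatorial definition into group ring language, so the plan is simply to expand $DD^{(-1)}$ and match coefficients. The key is being meticulous about the identity element, since every exceptional subgroup $N_i$ contains it.

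First I would expand
\[
DD^{(-1)} = \sum_{d_1,d_2 \in D} d_1 d_2^{-1} = k\cdot 1 + \sum_{g \neq 1} a_g \, g,
\]
where the diagonal pairs $d_1 = d_2$ yield the constant $k$, and for each non-identity $g \in G$ the coefficient $a_g$ equals the number of ordered pairs $(d_1,d_2) \in D \times D$ with $d_1 d_2^{-1} = g$. By the definition of a generalized difference set, $a_g = \lambda$ when $g \in G \setminus (N_1 \cup \cdots \cup N_r)$, and $a_g = \lambda_i$ when $g$ is a non-identity element of $N_i$ (this is well defined because the $N_i$'s intersect pairwise trivially, so the non-identity parts are disjoint).

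Next I would rewrite the support sums in group ring form. Since the $N_i$ meet only at $1$, the non-identity elements of $N_1 \cup \cdots \cup N_r$ are represented by $(N_1 - 1) + \cdots + (N_r - 1)$, and hence the non-identity elements of $G \setminus (N_1 \cup \cdots \cup N_r)$ are represented by
\[
(G - 1) - \sum_{i=1}^{r}(N_i - 1) = G - N_1 - \cdots - N_r + (r-1)\cdot 1.
\]
Substituting these into the expansion of $DD^{(-1)}$ and collecting the $\lambda(r-1)$ and $-\sum_i \lambda_i$ contributions at the identity yields exactly
\[
DD^{(-1)} = k - \bigl(\lambda(1-r)+\lambda_1+\cdots+\lambda_r\bigr) + \lambda\bigl(G - N_1 - \cdots - N_r\bigr) + \lambda_1 N_1 + \cdots + \lambda_r N_r,
\]
which proves the forward direction.

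For the converse, I would observe that each step above is reversible: given the stated group ring identity, read off the coefficient of each $g \neq 1$ on the right side (using that the $N_i^*$ are disjoint) to recover the multiplicities $\lambda, \lambda_1, \ldots, \lambda_r$ required by the definition; the coefficient of the identity being $k$ forces $|D| = k$. The only delicate point, and the one I would double-check, is the bookkeeping at the identity, since each $N_i$ and $G$ all carry a copy of $1$ and the constant $k - (\lambda(1-r)+\sum_i\lambda_i)$ is precisely the correction that makes the identity coefficient come out to $k$. No step is really difficult; the entire argument is a coefficient comparison, and the only place one might slip is in the $r-1$ arising from inclusion-exclusion on the exceptional subgroups.
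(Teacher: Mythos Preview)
Your argument is correct and is exactly the standard coefficient comparison one would expect; the paper itself does not supply a proof of this proposition but merely cites it from \cite{PWZ12}, so there is no in-paper proof to compare against. The only point worth flagging is that your derivation matches the stated formula precisely, including the identity-coefficient correction $k-(\lambda(1-r)+\sum_i\lambda_i)$, so nothing further is needed.
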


The $(v,k, \lambda)$ {\em difference sets} are special types of generalized difference sets with parameters $(v; 1; k, \lambda; 0)$, the $(m,n,k, \lambda)$ {\em relative difference sets} are those with parameters $(mn; n; k, \lambda; 0)$, and the $(m,n,k,\lambda_1, \lambda_2)$ {\em divisible difference sets} are those with parameters $(mn; n; k, \lambda_2; \lambda_1)$. By convention, we still use $(v, k, \lambda)$, $(m,n,k,\lambda)$ and $(m,n,k,\lambda_1,\lambda_2)$ to denote the parameters of difference sets, relative difference sets, and divisible difference sets, respectively. For more information on these combinatorial objects, we refer to~\cite{BJL99,Lan83,Pott95,Schmidt02}.

Let $H \leq G$ be a subgroup of $G$, and let $\rho_H$ denote the canonical epimorphism $G \rightarrow G/H$ with $\rho_H (g) = gH$. For a generalized difference set $D$, if $H \leq N_i$ for some $N_i$ with $\lambda_i = 0$, then $\rho_H(D)$ can be interpreted as a subset of $G/H$. In particular, we have the following result on projections of generalized difference sets with respect to two exceptional subgroups.

\begin{proposition}\label{pro-project}
Let $D$ be a generalized $(v; n_1, n_2; k, \lambda; 0, 0)$ difference set in $G$ relative to $N_1$ and $N_2$. If $H \leq N_1$ is a subgroup of order $m$, then $\rho_H(D)$ is a generalized $(v/m; n_1/m, n_2; k, m \lambda; 0, \lambda (m-1))$ difference set.
\end{proposition}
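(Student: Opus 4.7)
The plan is to use the group ring characterization from Proposition~\ref{pro-gds} and apply the canonical homomorphism $\rho_H$, viewed as extended linearly to a ring homomorphism $\bbZ[G] \to \bbZ[G/H]$.

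First I would write out what Proposition~\ref{pro-gds} says for $D$ under the hypotheses $r = 2$, $\lambda_1 = \lambda_2 = 0$. This collapses to the identity
$$
D D^{(-1)} = k + \lambda + \lambda(G - N_1 - N_2)
$$
in $\bbZ[G]$. Before pushing this forward, I would check that $\rho_H$ is injective on $D$, so that $\rho_H(D)$ is an honest $k$-subset rather than a multiset: if $d_1, d_2 \in D$ satisfy $\rho_H(d_1) = \rho_H(d_2)$, then $d_1 d_2^{-1} \in H \leq N_1$, and since $\lambda_1 = 0$ forces the only occurrence of $N_1$-elements in the difference list to be the identity, this gives $d_1 = d_2$.

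Next I would compute the image of each piece under $\rho_H$. Because $|H| = m$ and $H \leq N_1$, the element $\rho_H(G) = m \cdot (G/H)$ and $\rho_H(N_1) = m \cdot \bar N_1$, where $\bar N_1 := \rho_H(N_1)$ has order $n_1/m$. For $N_2$, the condition $N_1 \cap N_2 = \{1\}$ combined with $H \leq N_1$ gives $H \cap N_2 = \{1\}$, so $\rho_H$ restricts to a bijection on $N_2$ and $\rho_H(N_2) = \bar N_2$ has order $n_2$. Using that $\rho_H$ is a ring homomorphism with $\rho_H(D)^{(-1)} = \rho_H(D^{(-1)})$, applying $\rho_H$ to the identity above yields
$$
\rho_H(D)\,\rho_H(D)^{(-1)} = k + \lambda + m\lambda(G/H - \bar N_1) - \lambda \bar N_2.
$$

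Finally I would compare this with the right-hand side of Proposition~\ref{pro-gds} for the candidate parameters $(v/m; n_1/m, n_2; k, m\lambda; 0, \lambda(m-1))$ in $G/H$; a direct expansion gives precisely the same expression, and the proof concludes. I do not expect any real obstacle: the only non-formal step is the injectivity of $\rho_H$ on $D$, and the rest is bookkeeping of how $\rho_H$ acts on the four subsets $G$, $N_1$, $N_2$, and the identity term.
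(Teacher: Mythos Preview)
Your proposal is correct and follows the natural group-ring approach; the paper itself states Proposition~\ref{pro-project} without proof, so there is nothing to compare against beyond noting that this is exactly the kind of routine verification the authors evidently had in mind.

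One small point you glossed over: for $\rho_H(D)$ to qualify as a generalized difference set in $G/H$ relative to $\bar N_1$ and $\bar N_2$, you need these two subgroups to intersect trivially in $G/H$. This is easy but worth a sentence: if $n_1 H = n_2 H$ with $n_1 \in N_1$ and $n_2 \in N_2$, then $n_2^{-1} n_1 \in H \leq N_1$, hence $n_2 \in N_1 \cap N_2 = \{1\}$. With that check added, your argument is complete.
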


Generalized difference sets with parameters $(n(n-1); n, n-1; n-1,1; 0, 0)$ in a group $G$ relative to subgroups $N_1$ of order $n$ and $N_2$ of order $n-1$ are called {\em direct product difference sets}~\cite{Ganley77,Pott95} (for their connections to projective planes, see also~\cite{PWZ12}). By Proposition~\ref{pro-project}, the trace function can be viewed as the projection of the classical direct product difference set: $\{(x,x): x \in
\gf_{q^n}^*\} \subseteq (\gf_{q^n}^*, \cdot) \times (\gf_{q^n}, +)$ (see~\cite{PWZ12}). In our language of generalized difference sets, the set $\{(x, \tr_{q^n/q}(x)) : x \in \gf_{q^n}^*\} \subseteq (\gf_{q^n}^*, \cdot ) \times (\gf_q, +)$ has parameters $(q(q^n-1); q, q^n - 1; q^n -1, q^{n-1}; 0, q^{n-1} - 1)$. In fact, every difference balanced function can be characterized as a generalized difference set with such parameters: a function $f: \gf_{q^n}^* \rightarrow \gf_q$ is
difference balanced, if and only if the set $D := \{(x, f(x)): x \in \gf_{q^n}^* \} \subseteq G = N_2 \times N_1$ is a generalized $(q(q^n-1); q, q^n -1; q^n -1, q^{n-1}; 0, q^{n-1} -1 )$ difference set relative to $N_1 = (\gf_q,+)$ and $N_2 = (\gf_{q^n}^*,\cdot)$. In group ring language, 
\begin{equation}\label{eqn-gds}
  D D^{(-1)} = q^n  + q^{n-1} G - q^{n-1} N_1 - N_2.
\end{equation}


Character theory is a powerful tool to investigate difference sets. Let $G$ be an abelian group. A character $\chi$ of $G$ is a homomorphism $\chi: G \rightarrow \bC^*$. The set of all such characters forms a group $\hat{G}$ isomorphic to $G$, and the identity element of $\hat{G}$ (which maps every element in $G$ to $1$) is called the {\em principal} character, denoted by $\chi_0$. The following property of characters is well-known.

\begin{fact}[Orthogonality relations]\label{fact-charorth}
Let $G$ be an abelian group of order $v$ with identity $e$. Then
$$
\sum_{\chi \in \hat{G}} \chi(g) = \left\{ \begin{array}{cl}
  0 & \textrm{ if $g \ne e$,}\\
  v & \textrm{ if $g = e$,}
\end{array}\right.
$$
and
$$
\sum_{g \in G} \chi(g) = \left\{ \begin{array}{cl}
  0 & \textrm{ if $\chi \ne \chi_0$,}\\
  v & \textrm{ if $\chi = \chi_0$.}
\end{array}\right.
$$
\end{fact}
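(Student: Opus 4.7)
The plan is to prove both identities by the same ``translation'' trick, using that characters of $G$ separate points. For the second identity, fix $\chi \in \hat{G}$ and set $S = \sum_{g \in G} \chi(g)$. If $\chi = \chi_0$, trivially $S = v$. Otherwise I would pick $h \in G$ with $\chi(h) \ne 1$, which exists since $\chi$ is nonprincipal, and exploit the fact that $g \mapsto hg$ permutes $G$ to obtain $\chi(h)\, S = \sum_{g \in G} \chi(hg) = S$, forcing $(\chi(h) - 1) S = 0$ and hence $S = 0$.

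For the first identity, fix $g \in G$ and set $T = \sum_{\chi \in \hat{G}} \chi(g)$. If $g = e$, each term is $1$ and $T = |\hat{G}|$. Otherwise I would mirror the preceding argument with the roles of $G$ and $\hat{G}$ interchanged: pick $\psi \in \hat{G}$ with $\psi(g) \ne 1$, note that $\chi \mapsto \psi \chi$ permutes $\hat{G}$, so $\psi(g)\, T = \sum_{\chi \in \hat G}(\psi\chi)(g) = T$, and conclude $T = 0$.

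The only step that is not a formal manipulation, and therefore the main obstacle, is the existence of such a $\psi$ whenever $g \ne e$, together with the identity $|\hat{G}| = v$ needed in the case $g = e$. I would handle both by invoking the structure theorem for finite abelian groups to write $G \cong \bbZ/n_1\bbZ \oplus \cdots \oplus \bbZ/n_s\bbZ$: for any non-identity $g = (g_1, \ldots, g_s)$ some coordinate $g_i \not\equiv 0 \pmod{n_i}$, and the character sending $(x_1, \ldots, x_s)$ to $\zeta_{n_i}^{x_i}$ evaluates at $g$ to a nontrivial root of unity. The same decomposition identifies $\hat{G}$ with $\prod_i \widehat{\bbZ/n_i\bbZ}$, and since each $\widehat{\bbZ/n\bbZ}$ is cyclic of order $n$ (characters parametrized by $j \mapsto (x \mapsto \zeta_n^{jx})$), one reads off $|\hat{G}| = \prod_i n_i = v$, completing both identities.
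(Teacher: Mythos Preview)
Your argument is correct and is the standard textbook proof of the orthogonality relations. Note, however, that the paper does not supply a proof of this statement at all: it is recorded as a \emph{Fact} (the well-known orthogonality relations for characters of finite abelian groups) and is used without justification. So there is no ``paper's own proof'' to compare against; you have simply filled in a routine verification that the authors chose to omit.
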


If $\chi$ is a character of an abelian group, then $\chi(D^{(-1)}) = \overline{\chi(D)}$, i.e., $\chi(D^{(-1)})$ is the complex conjugate of $\chi(D)$. By Fact~\ref{fact-charorth} and (\ref{eqn-gds}), the character values of $D D^{(-1)}$ are
\begin{equation}\label{eqn-char}
  \chi(D)\overline{\chi(D)} = \left\{\begin{array}{cl} 
    (q^n-1)^2 & \textrm{ if $\chi = \chi_0$,}\\
    0 & \textrm{ if $\chi|N_1 = \chi_0$ and $\chi \ne \chi_0$,}\\
    1 & \textrm{ if $\chi|N_2 = \chi_0$ and $\chi \ne \chi_0$,}\\
    q^n & \textrm{ if $\chi|N_1 \ne \chi_0$ and $\chi|N_2 \ne \chi_0$.}
  \end{array}\right.
\end{equation}

For an abelian (generalized) difference set $D$, multipliers are useful especially when we want to obtain information about $\chi(D)$ from $\chi(D) \overline{\chi(D)}$. {\em Multipliers} are group homomorphisms $\psi$ such that $\psi(D) := \{ \psi(x): x \in D \}$ is a translate $gD$ of $D$, where $gD = \{g \cdot d : d \in D\}$. If an integer $t$ relatively prime to $|G|$ has the property that $D^{(t)} = gD$ for some $g \in G$, then $t$ is called a {\em numerical
multiplier} of $D$. It is convenient to use the mixture of additive and multiplicative notation since in the following the group $N_1$ is the additive group and $N_2$ is the multiplicative group of finite fields. In particular, for a generalized difference set $D = \{ (x , y) \}  \subseteq (N_2, \cdot) \times (N_1, +)$, by $D^{(t_1,t_2)}$ we denote the set $\{ (x^{t_1},  y\cdot t_2): (x,y) \in D\}$, and by $ (h_1,h_2) D$ we denote a translate of $D$: $\{( h_1  x, y + h_2): (x,y) \in D \}$.

\subsection{Two-tuple balanced functions}\label{sec-twotuple}

The two-tuple balance property is one of some randomness measurements for sequences, and it is also important for determining the autocorrelation of GMW sequences~\cite{GG05}. We now describe the two-tuple balance property of functions from $\gf_{q^n}^*$ to $\gf_q$.   

\begin{definition}\label{def-twotuple}\cite{GG05}
  For a function $f: \gf_{q^n}^* \rightarrow \gf_q$, define
  \begin{equation*}
  N_{b_1,b_2} (a) = | \{ x \in \gf_{q^n}^*: (f(x), f(a x)) = (b_1, b_2) \} |,
\end{equation*}
  where $a \ne 0$ and $b_1, b_2 \in \gf_q$. Then $f$ is called {\em two-tuple balanced} if the following conditions are satisfied:
  \begin{itemize}
    \item[(i)] If $a \not\in \gf_q$, then
      $$
      N_{b_1,b_2} (a) = \left\{\begin{array}{cl}
	q^{n-2} - 1 & \textrm{ if $(b_1, b_2) = (0, 0)$,}\\
	q^{n-2} & \textrm{ if $(b_1, b_2) \ne (0, 0)$.}
      \end{array}\right.
      $$
    \item[(ii)] If $a \in \gf_q^*$, then there exists some $\mu \in \gf_q$ such that $(f(x), f(a x)) = (b_1, \mu b_1)$ for all $x \in \gf_{q^n}^*$ and 
      $$
      N_{b_1,b_2} (a) = \left\{\begin{array}{cl}
	q^{n-1} - 1 & \textrm{ if $(b_1, b_2) = (0, 0)$,}\\
	q^{n-1} & \textrm{ if $(b_1, b_2) = (b_1, \mu b_1)$ for $b_1 \in \gf_q$,} \\
	0 & \textrm{ otherwise.}
      \end{array}\right.
      $$
  \end{itemize}
\end{definition}

In~\cite{GS06}, the following relation between the two-tuple balance property and the difference balance property was given. 

\begin{theorem}~\cite{GS06}\label{thm-ttb-db}
  If a difference balanced function $f: \gf_{q^n}^* \rightarrow \gf_q$ is $d$-homogeneous, then $f$ must be two-tuple balanced.
\end{theorem}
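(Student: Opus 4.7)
The plan is to handle the two cases of Definition~\ref{def-twotuple} separately. Case (ii), where $a \in \gf_q^*$, is almost immediate: $d$-homogeneity gives $f(ax) = a^d f(x)$, so $(f(x), f(ax)) = (b_1, \mu b_1)$ with $\mu = a^d$ matches the shape required, and $N_{b_1, \mu b_1}(a) = |f^{-1}(b_1)|$. All that remains is to show that $f$ itself is balanced. Picking any $a_0 \in \gf_q^* \setminus \{1\}$ (available because $p$ is odd and hence $q \geq 3$), difference balance at $a_0$ yields that $f(a_0 x) - f(x) = (a_0^d - 1) f(x)$ is balanced, and since $\gcd(d, q-1) = 1$ forces $a_0^d \neq 1$, this transfers balance directly to $f$.

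For case (i), where $a \notin \gf_q$, I would first use $d$-homogeneity to cut down the number of unknowns. The substitution $x = \lambda y$ with $\lambda \in \gf_q^*$ yields $N_{b_1, b_2}(a) = N_{\lambda^{-d} b_1, \lambda^{-d} b_2}(a)$; since $\gcd(d, q-1) = 1$, the value of $N_{b_1, b_2}(a)$ depends only on the diagonal $\gf_q^*$-orbit of $(b_1, b_2)$. Hence the unknowns collapse to $n_0 := N_{0,0}(a)$, two ``edge'' counts for the orbits $\{(0,*)\}$ and $\{(*,0)\}$, and ``slope'' counts $n_c := N_{b_1, c b_1}(a)$ for $c \in \gf_q^*$. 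Row and column totals $\sum_{b_2} N_{b_1, b_2}(a) = |f^{-1}(b_1)|$, together with the balance of $f$ established above, equate the two edges to a common $n_e$ and give
\begin{equation*}
n_0 + (q-1) n_e = q^{n-1} - 1, \qquad n_e + \sum_{c \in \gf_q^*} n_c = q^{n-1}.
\end{equation*}

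The final step, and the main obstacle, is to force all slopes $n_c$ to coincide. I would apply difference balance not just at $a$ but at every element $\mu a$ with $\mu \in \gf_q^*$, which is legitimate because $a \notin \gf_q$ guarantees $\mu a \neq 1$. Using $d$-homogeneity to write $f(\mu a x) = \mu^d f(ax)$, the balance of $\mu^d f(ax) - f(x)$ at the value $0$ rearranges into $n_0 + (q-1) n_{\mu^{-d}} = q^{n-1} - 1$ for every $\mu \in \gf_q^*$. As $\mu^{-d}$ sweeps out $\gf_q^*$, every slope $n_c$ equals $n_e$, and the linear system above then solves to $n_0 = q^{n-2} - 1$ and $n_e = q^{n-2}$, exactly the counts demanded by Definition~\ref{def-twotuple}(i). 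The conceptual difficulty is that difference balance at $a$ alone supplies too few equations to distinguish the slopes; the remedy is to enlarge the information by exploiting the scalar-orbit symmetry from $d$-homogeneity over the entire coset $\gf_q^* \cdot a$, so that each $\mu$ contributes one new linear equation isolating a different slope.
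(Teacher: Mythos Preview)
The paper does not supply its own proof of this theorem; it is quoted verbatim from Gong--Song~\cite{GS06} and invoked as a black box in the proof of Theorem~\ref{thm-equi} ((i)\,$\Rightarrow$\,(ii)). So there is no ``paper's proof'' to compare against.

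Your argument is correct and self-contained. Case~(ii) is handled cleanly: $d$-homogeneity gives $f(ax)=a^d f(x)$ with $a^d\ne 1$ for $a\ne 1$, and difference balance at any such $a$ forces $(a^d-1)f(x)$, hence $f$ itself, to be balanced. For case~(i), the orbit reduction via $x\mapsto\lambda x$ is valid, the row/column totals are computed correctly, and the decisive idea---applying difference balance at each $\mu a$ with $\mu\in\gf_q^*$ and reading off the value $0$---does exactly what you claim: it produces $n_0+(q-1)n_{\mu^{-d}}=q^{n-1}-1$, and since $\mu^{-d}$ ranges over $\gf_q^*$ this pins every slope $n_c$ to $n_e$. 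The final linear algebra then yields $n_e=q^{n-2}$ and $n_0=q^{n-2}-1$. One small point worth making explicit in a write-up: the balance of $f$ established in case~(ii) is being reused in case~(i) to evaluate the row/column totals, so the order of the two cases matters.
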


Conversely, for a function from $\gf_{q^n}^*$ to $\gf_q$, the two-tuple balance property implies both the difference balance property and the balance property (see~\cite{GS06}). In Section~\ref{sec-equi}, we will see that the converse of Theorem~\ref{thm-ttb-db} also holds, i.e., the two-tuple balance property implies the $d$-homogeneity.

\section{The balance property of difference balanced functions}\label{sec-balance}

It was proved in~\cite{LG00} that every difference balanced function $f$ from $\gf_{q^n}^*$ to $\gf_q$ must be balanced or an affine shift of a balanced function $g$, i.e., $f = g + b$ for some $b \in \gf_q$, if $q$ is a prime. Now we show that this is also true for $q$ a prime power.    


\begin{theorem}\label{thm-balance}
  Every difference balanced function $f: \gf_{q^n}^* \rightarrow \gf_q$ must be of the form $f = g + b$ for some $b \in \gf_q$, where the function $g$ is balanced. 
\end{theorem}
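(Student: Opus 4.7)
The plan is to exploit the group ring characterization of difference balanced functions given in equation~(\ref{eqn-gds}) and project onto the small exceptional subgroup $N_1=(\gf_q,+)$. Writing $n_b:=|\{x\in\gf_{q^n}^* : f(x)=b\}|$ for $b\in\gf_q$, the goal is to show that there is a unique $b_0\in\gf_q$ with $n_{b_0}=q^{n-1}-1$ while $n_b=q^{n-1}$ for every other $b$; the translate $g:=f-b_0$ is then balanced in the sense of Section~\ref{sec-intro}, giving $f=g+b_0$ as required.

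First I would apply the group ring homomorphism $\rho_{N_2}:\bbZ[G]\to\bbZ[G/N_2]\cong\bbZ[N_1]$ induced by $(x,y)\mapsto y$ to both sides of~(\ref{eqn-gds}). A direct computation gives $\rho_{N_2}(D)=\sum_b n_b\,b=:N$, $\rho_{N_2}(G)=(q^n-1)N_1$, $\rho_{N_2}(N_1)=N_1$, and $\rho_{N_2}(N_2)=(q^n-1)e$, where $e$ denotes the identity of $N_1$. Combining $\rho_{N_2}(q^n\cdot e_G)=q^n\,e$ with $-\rho_{N_2}(N_2)=-(q^n-1)\,e$ gives the clean coefficient $+1$ at $e$, and the image of~(\ref{eqn-gds}) becomes
\begin{equation*}
  N\cdot N^{(-1)} \;=\; e + q^{n-1}(q^n-2)\,N_1 \qquad \text{in }\bbZ[N_1].
\end{equation*}
Reading off the coefficient of $e$ on both sides yields the single scalar identity $\sum_{b\in\gf_q}n_b^2=1+q^{n-1}(q^n-2)$.

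Next I would substitute $n_b=q^{n-1}+\epsilon_b$ with $\epsilon_b\in\bbZ$. The cardinality $|D|=q^n-1$ forces $\sum_b\epsilon_b=-1$, and inserting the substitution into the scalar identity above collapses it (after expansion and cancellation) to $\sum_b\epsilon_b^2=1$. Since the $\epsilon_b$ are integers, the second equation forces exactly one $\epsilon_{b_0}$ to lie in $\{\pm 1\}$ while all others vanish, and the first equation then pins it to $\epsilon_{b_0}=-1$. Hence $n_{b_0}=q^{n-1}-1$ and $n_b=q^{n-1}$ for all $b\neq b_0$, so $g:=f-b_0$ takes the value $0$ exactly $q^{n-1}-1$ times and each nonzero value exactly $q^{n-1}$ times; that is, $g$ is balanced and $f=g+b_0$.

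No serious obstacle is anticipated. The only slightly delicate step is the bookkeeping under $\rho_{N_2}$: one must track that the constant term $q^n$ in~(\ref{eqn-gds}) and the full-subgroup term $N_2$ both collapse to multiples of $e$ and verify that they combine to the lone coefficient $+1$ on the right-hand side. Once this is recorded, the rigidity of the two integer relations $\sum_b\epsilon_b=-1$ and $\sum_b\epsilon_b^2=1$ delivers the conclusion essentially for free, and the argument never invokes primality of $q$, so it extends the prime case of~\cite{LG00} to arbitrary prime powers.
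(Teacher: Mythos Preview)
Your argument is correct and rests on the same core identity as the paper: both of you extract from~(\ref{eqn-gds}) the sum-of-squares relation $\sum_{b}n_b^2=q^{2n-1}-2q^{n-1}+1$ (the paper obtains it by counting the elements of the form $(a,0)$ in $DD^{(-1)}$, you by applying $\rho_{N_2}$ and reading off the coefficient at $e$, which is the same computation). Where you diverge is in the endgame. The paper additionally counts the elements $(a,b)$ with $b\neq 0$, combines the two counts into $\sum_{b_1\neq b_2}(d_{b_1}-d_{b_2})^2=q-1$, and then runs an inequality argument ($k(q-k)\le q-1$ forces $k\in\{1,q-1\}$) to pin down the multiset $\{d_b\}$. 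Your substitution $n_b=q^{n-1}+\epsilon_b$ converts the same information into the pair $\sum_b\epsilon_b=-1$, $\sum_b\epsilon_b^2=1$, after which integrality finishes the proof in one line. Your route is genuinely shorter: the second count in the paper is in fact redundant (it follows from the first together with $\sum_b d_b=q^n-1$), and your recentering around $q^{n-1}$ makes the rigidity of the solution transparent without any inequality step.
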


\begin{proof}
  Define $D_b := \{(x,b) \in G : f(x) = b \}$ and $d_b = |D_b|$, where $G = \gf_{q^n}^* \times \gf_q$ and $b \in \gf_q$. We count the number of certain elements in $DD^{(-1)}$ in two ways: one by direct calculation, and the other by the group ring equation (\ref{eqn-gds}).

  First, by counting the number of $(a,0)$ in $DD^{(-1)}$ where $a \in \gf_{q^n}^*$, we have
  \begin{eqnarray}\label{eqn-ban1st}
    \sum_{b \in \gf_q} d_b^2 & =  & q^n + q^{n-1} (q^n - 1) - q^{n-1} - (q^n - 1)  \nonumber \\
    & = & q^{2n-1} - 2 q^{n-1} + 1.
  \end{eqnarray}
  Second, we calculate the number of $(a,b)$ in $DD^{(-1)}$ where $a \in \gf_{q^n}^*$ and $b \ne 0$:
  \begin{eqnarray}\label{eqn-ban2nd}
    \sum_{\substack{b_1 \ne b_2 \\ b_1, b_2 \in \gf_q}} 2 d_{b_1} d_{b_2} & = & q^{n-1} (q^n-1) (q-1) - q^{n-1}(q-1) \nonumber \\
     & = & q^{2n} - 2 q^n - q^{2n-1} + 2 q^{n-1} .
  \end{eqnarray}
  With (\ref{eqn-ban1st}) and (\ref{eqn-ban2nd}), we get 
  \begin{equation}\label{eqn-ban3rd}
  (q-1) \sum_{b \in \gf_q} d_b^2 - \sum_{\substack{b_1 \ne b_2 \\  b_1, b_2 \in \gf_q }} 2 d_{b_1} d_{b_2} = \sum_{\substack{b_1 \ne b_2 \\ b_1, b_2\in \gf_q}} (d_{b_1} - d_{b_2})^2 = q - 1.
  \end{equation}
  
  Since the summation of $(d_{b_1} - d_{b_2})^2 $ is not $0$, there exist at least two distinct elements $b_1, b_2 \in \gf_q$ such that $|d_{b_1} - d_{b_2}| \geq 1$. Without loss of generality, suppose that there are $k$ elements $b \in \gf_q$ with $d_b = d_{b_1}$, where $1 \leq k \leq q-1$. Then there are $q - k$ elements $b \in \gf_q$ left with $d_b \ne d_{b_1}$ (these $q - k$ of $d_b$ may or may not equal to $d_{b_2}$). Thus, we have   
  $$
  q - 1 = \sum_{\substack{b_1 \ne b_2 \\ b_1, b_2 \in \gf_q}} (d_{b_1} - d_{b_2})^2 \geq k (q - k),
  $$
  and further we get $k \ge q -1$ or $k \le 1$. It then follows that (\ref{eqn-ban3rd}) is satisfied if and only if $d_c = \hat{d}$ for some $c \in \gf_q$ and $d_b = \hat{d} + 1$ or $d_b = \hat{d} - 1$ for each $b \ne c$. Note that $\sum_{b \in \gf_q} d_b = q^n - 1$, hence we have $d_c = q^{n-1} - 1$ for some $c \in \gf_q$ and $d_b = q^{n-1}$ for each $b \ne c$. The proof is then completed.
  
 \end{proof}

  

\begin{remark}\label{rmk-balance}
  By Theorem~\ref{thm-balance}, we see that every difference balanced function must be balanced, or an affine shift of a balanced function. In the sequel, without loss of generality, we may always assume that there is one zero less in the image set of a difference balanced function $f$ (otherwise, let $f = f - b$ for a suitable $b \in \gf_q^*$). 
  
\end{remark}

\section{$d$-homogeneous difference balanced functions}\label{sec-equi}

In this section, we give several necessary and sufficient conditions for $d$-homogeneous difference balanced functions from $\gf_{q^n}^*$ to $\gf_q$.

\begin{theorem}\label{thm-equi}
For a function $f$ from $\gf_{q^n}^*$ onto $\gf_q$, the following four statements are pairwise equivalent: 
  \begin{itemize}
    \item[(i)] $f$ is $d$-homogeneous and difference balanced;
    \item[(ii)] $f$ is two-tuple balanced;
    \item[(iii)] $D := \{(x, f(x)): x \in \gf_{q^n}^* \} \subseteq G = \gf_{q^n}^* \times \gf_q$ is a generalized difference set with parameters $(q(q^n-1); q, q^n - 1; q^n - 1, q^{n-1}; 0, q^{n-1} - 1)$ relative to $(\gf_q,+)$ and $(\gf_{q^n}^*, \cdot)$, and $(1, t)$ with $t \in \gf_q^*$ are multipliers of $D$ with $D^{(1,t)} = (a,0)D$ for some $a \in \gf_{q^n}^*$; 
    \item[(iv)] there exists a $\left( \frac{q^n-1}{q-1}, q-1, q^{n-1}, q^{n-2} \right)$ difference set in $\gf_{q^n}^* $ relative to $\gf_q^*$.
  \end{itemize}
\end{theorem}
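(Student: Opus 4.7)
The plan is to anchor the proof at condition (i) and establish each of (i) $\Leftrightarrow$ (iii), (i) $\Leftrightarrow$ (ii), and (i) $\Leftrightarrow$ (iv) in turn. \textbf{For (i) $\Leftrightarrow$ (iii)}, the characterization of difference balance as $D$ being the stated generalized difference set has already been recorded in Subsection~\ref{sec-gds}, so I only need to connect $d$-homogeneity of $f$ with the multiplier property. Unfolding the definitions, $D^{(1,t)} = \{(x, tf(x)) : x \in \gf_{q^n}^*\}$ and $(a,0)D = \{(ax, f(x)) : x \in \gf_{q^n}^*\}$ coincide exactly when $f(ax) = t^{-1}f(x)$ for every $x$. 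If $f$ is $d$-homogeneous, then for each $t \in \gf_q^*$ the equation $a^d = t^{-1}$ has a unique solution $a \in \gf_q^*$ (since $\gcd(d,q-1)=1$), realizing the multiplier. Conversely, writing $a_t$ for a witness, I claim $t \mapsto a_t$ is a group homomorphism $\gf_q^* \to \gf_{q^n}^*$: multiplicativity uses triviality of the $f$-stabilizer $\{b : f(bx)=f(x) \text{ for all } x\}$, which is forced by difference balance, and injectivity is analogous. The image is then a cyclic subgroup of order $q-1$ inside the cyclic group $\gf_{q^n}^*$, hence coincides with $\gf_q^*$, and its inverse is an automorphism $\alpha \mapsto \alpha^d$ of $\gf_q^*$ with $\gcd(d,q-1)=1$.

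\textbf{For (i) $\Leftrightarrow$ (ii)}, the forward direction is Theorem~\ref{thm-ttb-db}. Conversely, the second clause of Definition~\ref{def-twotuple} produces a map $\mu : \gf_q^* \to \gf_q$ with $f(ax) = \mu(a) f(x)$. Surjectivity of $f$ rules out $\mu(a) = 0$, making $\mu$ a group homomorphism into $\gf_q^*$; and if $\mu(a) = 1$ held for some $a \ne 1$, then $f(ax) - f(x) \equiv 0$ would violate the difference balance property that the text has noted follows from two-tuple balance. Hence $\mu$ is an automorphism of the cyclic group $\gf_q^*$, so $\mu(a) = a^d$ with $\gcd(d,q-1) = 1$.

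\textbf{For (i) $\Leftrightarrow$ (iv)}, in the forward direction I take $R := f^{-1}(1)$, which has size $q^{n-1}$ by Theorem~\ref{thm-balance}. Two-tuple balance combined with $d$-homogeneity then gives $N_{1,1}(y) = q^{n-2}$ for $y \not\in \gf_q$ and $N_{1,1}(y) = 0$ for $y \in \gf_q^* \setminus \{1\}$ (since $1 = y^d$ forces $y = 1$), which is precisely the RDS identity $RR^{(-1)} = q^{n-1} + q^{n-2}(\gf_{q^n}^* - \gf_q^*)$. Conversely, this identity forces the translates $\{tR : t \in \gf_q^*\}$ to be pairwise disjoint, so the assignment $f(x) := t$ for $x \in tR$ and $f(x) := 0$ otherwise is a well-defined $1$-homogeneous function on $\gf_{q^n}^*$. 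To conclude that $f$ is difference balanced I verify (\ref{eqn-gds}) for $D = \{(x, f(x))\}$ via character sums: for every $\chi = \psi_1 \times \psi_2$ of $\gf_{q^n}^* \times \gf_q$, one expresses $\chi(D)$ as a linear combination of $1$, $\psi_1(R)$ and $\sum_{t \in \gf_q^*} \psi_1(t) \psi_2(t)$, and combines this with the identity $|\psi_1(R)|^2 = q^{n-1} + q^{n-2}(\psi_1(\gf_{q^n}^*) - \psi_1(\gf_q^*))$ read off from the RDS equation to match each case of (\ref{eqn-char}).

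The main obstacle will be the last case of the character computation, where both $\psi_1|_{\gf_q^*}$ and $\psi_2$ are non-trivial: the weighted sum $\sum_{t \in \gf_q^*} \psi_1(t) \psi_2(t)$ is then a Gauss sum of absolute value $\sqrt{q}$, and the target value $|\chi(D)|^2 = q^n$ emerges only after combining it with $|\psi_1(R)|^2 = q^{n-1}$. A secondary subtlety is establishing the triviality of the $f$-stabilizer used in the proof of (iii) $\Rightarrow$ (i), but this follows cleanly from the definition of difference balance itself.
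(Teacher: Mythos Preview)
Your proposal is correct.  The implications (i)$\Leftrightarrow$(ii), (i)$\Rightarrow$(iii), and (i)$\Rightarrow$(iv) match the paper's arguments almost verbatim.  Two places differ in method.

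For (iii)$\Rightarrow$(i), the paper shows $a_t\in\gf_q^*$ by observing $f(a_t^{-(q-1)}x)=t^{q-1}f(x)=f(x)$ and invoking the triviality of the stabilizer to get $a_t^{q-1}=1$.  You instead note that $t\mapsto a_t$ is an injective homomorphism $\gf_q^*\to\gf_{q^n}^*$, so its image has order $q-1$ and must equal the unique subgroup $\gf_q^*$ of that order in the cyclic group $\gf_{q^n}^*$.  Both are short; yours uses cyclicity of $\gf_{q^n}^*$ where the paper does not.

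For (iv)$\Rightarrow$(i), the paper constructs the same $1$-homogeneous $f$ but verifies the \emph{two-tuple balance} property directly, via five group-ring identities for the products $C_{b_1}C_{b_2}^{(-1)}$ (equations (\ref{eqn-equi31})--(\ref{eqn-equi35})).  You instead verify the generalized difference set equation (\ref{eqn-gds}) by computing $|\chi(D)|^2$ character by character, writing $\chi(D)=\psi_1(R)\bigl(\sum_{t\in\gf_q^*}\psi_1(t)\psi_2(t)-\psi_1(\gf_q^*)\bigr)+\psi_1(\gf_{q^n}^*)$ and then splitting on whether $\psi_1|_{\gf_q^*}$ is trivial.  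In the non-trivial subcase the inner sum is a classical Gauss sum of absolute value $\sqrt{q}$, which combines with $|\psi_1(R)|^2=q^{n-1}$ to give $q^n$; in the trivial subcase the inner sum collapses and $|\psi_1(R)|^2=q^{n-2}$ does the work.  The paper's route is entirely elementary group-ring manipulation, avoiding Gauss sums; your route is shorter and more conceptual but imports the Gauss-sum modulus as a black box.  Either is acceptable.
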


\begin{proof}
  
  \begin{itemize}
    \item (i) $\Rightarrow$ (ii): See Theorem~\ref{thm-ttb-db}.
   
    \item (ii) $\Rightarrow$ (i): Since the two-tuple balance property implies the difference balance property, it suffices to prove that $f$ is $d$-homogeneous. By the definition of the two-tuple balance property, for each $a \in \gf_q^*$, there exists some $\mu_a \in \gf_q^*$ such that $f(a x) = \mu_a f(x)$ for all $x \in \gf_{q^n}^*$. It follows that $\mu_a = 1$ if and only if $a = 1$, and further $a = a'$ if and only if $\mu_a = \mu_{a'}$. Define a mapping
      $\epsilon: a \mapsto \mu_a$, we then have $\epsilon( a a') = \mu_a \mu_{a'} = \epsilon(a) \epsilon(a')$. Thus, $\epsilon$ is indeed an automorphism of $\gf_q^*$. This means there must exist a $d$ with $\gcd(d, q-1) = 1$ such that $\mu_a = a^d$, and then $f$ is $d$-homogeneous. 

    \item (i) $\Rightarrow$ (iii): Since $f$ is difference balanced from $\gf_{q^n}^*$ to $\gf_q$, we see that $D := \{ (x, f(x)): x \in \gf_{q^n}^* \}$ is a generalized difference set. Note that $f$ is $d$-homogeneous by assumption. For each $t \in \gf_q^*$, there exists a unique $a \in \gf_q^*$ such that $a^d = t$. Thus, we have
      \begin{eqnarray*}
	D^{(1,t)} & = & \{ (x, tf(x)): x \in \gf_{q^n}^* \} \\
	& = & \{ (x, f(ax)): x \in \gf_{q^n}^* \} \\
	& = & \{ (a^{-1} x, f(x) ): x \in \gf_{q^n}^* \} \\
	& = & (a^{-1},0)D,
      \end{eqnarray*}
  which means that $(1,t)$ with $t \in \gf_q^*$ are multipliers of $D$ with $D^{(1,t)} = (a,0) D $ for some $a \in \gf_{q^n}^*$. 
  
\item (iii) $\Rightarrow$ (i): It suffices to prove that $f$ is $d$-homogeneous. Since $D^{(1,t)} = (a_t, 0)D$, for each $x \in \gf_{q^n}^*$ there exists a unique $y$ with $x = a_t y$ and $t f(x) = f(y)$, which means $f(a_t^{-1} x) = t f(x)$ for all $x \in \gf_{q^n}^*$. It then follows that $t = 1$ if and only if $a_t=1$, and further $t = t'$ if and only if $a_t = a_{t'}$. Since $t \in \gf_q^*$, for each $a_t$, we have 
     $$
     f(\underbrace{a_t^{-1} \cdots a_t^{-1}}_{p-1} x) = t^{p-1} x = x.
     $$
     Then $a_t^{p-1} = 1$ and each $a_t \in \gf_q^*$. Thus, the mapping $\epsilon: t \mapsto a_t$ defines an automorphism of $\gf_q^*$: $\epsilon (t  t') = a_t a_{t'} =  \epsilon(t) \epsilon(t')$. Hence, there must be a $d$ with $\gcd(d, q-1) = 1$ such that $t = a_t^d$, which means $f$ is indeed $d$-homogeneous.
       
   \item (ii) $\Rightarrow$ (iv): Define
  $$
  D_b := \{ x \in \gf_{q^n}^*: f(x) = b \},
  $$
  where $b \in \gf_q^*$. Then by the definition of the two-tuple balance property, we have
  \begin{eqnarray*}
    D_b D_b^{(-1)} & = & q^{n-1} + q^{n-2}  \left( \gf_{q^n}^* -  \gf_q^* \right) \\
    & = & q^{n-1} + q^{n-2} \gf_{q^n}^*  - q^{n-2}  \gf_q^* ,
  \end{eqnarray*}
  which means that $D_b$ is a difference set in $\gf_{q^n}^*$ relative to $\gf_q^*$ with parameters $\left( \frac{q^n-1}{q-1}, q-1, q^{n-1}, q^{n-2} \right)$.
  
  \item (iv) $\Rightarrow$ (ii): Suppose that $C \subseteq \gf_{q^n}^*$ is a relative difference set with the parameters described in (iv). Define $C_b$ as
  $$
  C_b := \{ bx : x \in C \},
  $$
  for all $b \in \gf_q^*$, and $C_0 := \gf_{q^n}^* \setminus \gf_q^* C $. Then $\gf_{q^n}^*$ is the disjoint union of all the $C_b$'s for $b \in \gf_q$. We then define a function $f: \gf_{q^n}^* \rightarrow \gf_q$ as 
  $$
  f(x) := b^d, \textrm{ if $x \in C_b$, where $\gcd(d,q-1) = 1$}.
  $$
  We now show that $f$ is $d$-homogeneous and difference balanced. Since $C$ is a $( (q^n-1)/(q-1), q-1, q^{n-1}, q^{n-2} )$ difference set relative to $\gf_q^*$, we have the group ring equation:
  $$
  C C^{(-1)} = q^{n-1} + q^{n-2} \gf_{q^n}^* - q^{n-2} \gf_q^*.
  $$
It then follows that
\begin{eqnarray}\label{eqn-equi31}
  C_0 C_0^{(-1)} & = &  (\gf_{q^n}^* - \gf_q^* C) (\gf_{q^n}^* - \gf_q^* C^{(-1)}) \nonumber \\
  & = & \gf_{q^n}^* \gf_{q^n}^*  -  \gf_q^* \gf_{q^n}^* C^{(-1)}  -  \gf_q^* \gf_{q^n}^* C + \gf_q^* \gf_q^* CC^{(-1)} \nonumber \\
  & = & (q^n - 1) \gf_{q^n}^*  - 2(q - 1) q^{n-1} \gf_{q^n}^*  + (q-1) \gf_q^* C C^{(-1)} \nonumber \\
  & = & (-q^n + 2 q^{n-1} - 1 ) \gf_{q^n}^* \nonumber \\
   &  &  + (q-1) \gf_q^* (q^{n-1} + q^{n-2} \gf_{q^n}^* - q^{n-2} \gf_q^*) \nonumber \\
  & = & (q^{n-1} - 1 ) + (q^{n-1} - 1)  (\gf_q^* - \{1\} ) \nonumber \\
  &   & + (q^{n-2} - 1)  (\gf_{q^n}^* - \gf_q^*). 
\end{eqnarray}
Similarly, for $b \in \gf_q^*$, we have
\begin{equation}\label{eqn-equi32}
  C_b C_b^{(-1)} =  q^{n-1} + q^{n-2} (\gf_{q^n}^* - \gf_q^* );
\end{equation}
for $b \in \gf_q^*$, we have
\begin{equation}\label{eqn-equi33}
  C_0 C_b^{(-1)} = q^{n-2} \left( (\gf_{q^n}^* - \gf_q^*) \times \{- b\} \right)
\end{equation}
and
\begin{equation}\label{eqn-equi34}
C_b C_0^{(-1)}  = q^{n-2} \left( (\gf_{q^n}^* - \gf_q^*) \times \{ b\} \right) .
\end{equation}
For $b_1, b_2 \in \gf_q^*$ with $b_1 \ne b_2$, we have
\begin{eqnarray}\label{eqn-equi35}
  \lefteqn{C_{b_1} C_{b_2}^{(-1)}} \\
  & = & q^{n-1} (b_1b_2^{-1}, b_1 - b_2) + q^{n-2} ( (\gf_{q^n}^* - \gf_q^*) \times  \{b_1 - b_2\} ). \nonumber
\end{eqnarray}
In fact, (\ref{eqn-equi31})-(\ref{eqn-equi35}) are equivalent to the two-tuple balance property of the function $f$. For instance, by (\ref{eqn-equi35}), we have
$$
N_{b_1, b_2}(a) = \left\{ \begin{array}{cl}
  q^{n-1} & \textrm{ if $a^d = b_1 b_2^{-1}$,} \\
  q^{n-2} & \textrm{ if $a \not\in \gf_q^*$,}\\
  0 & \textrm{ otherwise.}
\end{array}\right.
$$
Similarly, one can verify that $f$ is two-tuple balanced by (\ref{eqn-equi31})-(\ref{eqn-equi35}). The proof is then completed.  

  \end{itemize}

\end{proof}

\begin{remark}\label{rmk-cds}
  By (i) and (iv) in Theorem~\ref{thm-equi}, if $f: \gf_{q^n}^* \rightarrow \gf_q$ is $d$-homogeneous and difference balanced, we see that the preimage set of $f$
  $$
  D_b := \{x \in \gf_{q^n}^*: f(x) = b\}
  $$
  is a $\left( \frac{q^n-1}{q-1}, q-1, q^{n-1}, q^{n-2} \right)$ difference set relative to $\gf_q^*$ for each $b \in \gf_q^*$, and
  $$
  D_0 := \{x \in \gf_{q^n}^*: f(x) = 0 \}
  $$
  is actually a divisible difference set with parameters 
  $$
  \left( \frac{q^n-1}{q-1}, q-1, q^{n-1} - 1, q^{n-1} - 1, q^{n-2}- 1 \right).
  $$
  By the canonical epimorphism $\rho: \gf_{q^n}^* \rightarrow \gf_{q^n}^*/\gf_q^*$, for each $b \in \gf_q^*$, the projection $\rho(D_b)$ is a cyclic difference set with Singer parameters 
  $$
  \left( \frac{q^n-1}{q-1}, q^{n-1}, q^{n-2}(q-1) \right),
  $$
  and $\rho(D_0)$ is the complement of $\rho(D_b)$, i.e., a cyclic difference set with Singer parameters
  $$
  \left( \frac{q^n-1}{q-1}, \frac{q^{n-1}-1}{q-1}, \frac{q^{n-2} -1}{q-1} \right).
  $$
  This unifies some of the results in~\cite{CX03,No04,KNCH05}.
\end{remark}

Because of the equivalent relation between (i) and (iv) in Theorem~\ref{thm-equi}, the Gordon-Mills-Welch construction~\cite{GMW62} and the No construction~\cite{No04} of difference balanced functions from $\gf_{q^n}^*$ to $\gf_q$ can be reformulated as the product of two relative difference sets as follows.

\begin{corollary}\cite[Corollary 3.2.2]{Pott95}\label{coro-casc}
  Suppose that $D_1$ is a difference set in $\gf_{q^n}^*$ relative to $\gf_{q^\ell}^*$ with parameters $\left( \frac{q^n - 1}{q^\ell - 1}, q^\ell - 1, q^{n-\ell}, q^{n-2\ell} \right)$ , where $\ell$ is a divisor of $n$, and $D_2$ is a $\left( \frac{q^\ell - 1}{q-1}, q - 1, q^{\ell-1}, q^{\ell-2} \right)$ difference set in $\gf_{q^\ell}^*$ relative to $\gf_q^*$. Then the set 
  $$
  D := \{ d_1 d_2: d_1 \in D_1, d_2 \in D_2 \}
  $$
  is a difference set in $\gf_{q^n}^*$ relative to $\gf_q^*$ with parameters
  $$
  \left( \frac{q^n-1}{q-1}, q - 1, q^{n-1}, q^{n-2} \right).
  $$
\end{corollary}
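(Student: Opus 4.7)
The plan is to work in the group ring $\bbZ[\gf_{q^n}^*]$ and verify that the product $D := D_1 \cdot D_2$ reduces to the characteristic function of a $q^{n-1}$-subset of $\gf_{q^n}^*$ that satisfies the relative difference set equation
\[
D D^{(-1)} = q^{n-1} + q^{n-2}(\gf_{q^n}^* - \gf_q^*),
\]
which, by Proposition~\ref{pro-gds}, is precisely the characterization required.

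First I would translate the hypotheses into the group-ring identities
\[
D_1 D_1^{(-1)} = q^{n-\ell} + q^{n-2\ell}(\gf_{q^n}^* - \gf_{q^\ell}^*), \qquad D_2 D_2^{(-1)} = q^{\ell-1} + q^{\ell-2}(\gf_{q^\ell}^* - \gf_q^*),
\]
and then check that the products $d_1 d_2$ with $(d_1, d_2) \in D_1 \times D_2$ are pairwise distinct, so that $D_1 \cdot D_2$ really is the characteristic function of a set of cardinality $|D_1||D_2| = q^{n-1}$. If $d_1 d_2 = d_1' d_2'$, then $d_1(d_1')^{-1} = d_2' d_2^{-1} \in \gf_{q^\ell}^*$, but the $\lambda_1 = 0$ hypothesis for $D_1$ relative to $\gf_{q^\ell}^*$ precludes any non-identity element of $\gf_{q^\ell}^*$ from appearing as such a quotient, forcing $d_1 = d_1'$ and hence $d_2 = d_2'$.

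Since $\gf_{q^n}^*$ is abelian, I would next write $D D^{(-1)} = (D_1 D_1^{(-1)})(D_2 D_2^{(-1)})$ and expand into four pieces. The only non-routine piece is the cross term $(\gf_{q^n}^* - \gf_{q^\ell}^*)(\gf_{q^\ell}^* - \gf_q^*)$. Using the identity $H \cdot K = |H|\, K$ whenever $H \le K$ and exploiting the chain $\gf_q^* \subseteq \gf_{q^\ell}^* \subseteq \gf_{q^n}^*$, a short computation yields
\[
(\gf_{q^n}^* - \gf_{q^\ell}^*)(\gf_{q^\ell}^* - \gf_q^*) = (q^\ell - q)(\gf_{q^n}^* - \gf_{q^\ell}^*).
\]

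Finally, I would collect coefficients. The constant term is $q^{n-1}$; the $(\gf_{q^\ell}^* - \gf_q^*)$ contribution is $q^{n-2}(\gf_{q^\ell}^* - \gf_q^*)$; and the two $(\gf_{q^n}^* - \gf_{q^\ell}^*)$ contributions combine to $q^{n-\ell-1}(\gf_{q^n}^* - \gf_{q^\ell}^*) + (q^{n-2} - q^{n-\ell-1})(\gf_{q^n}^* - \gf_{q^\ell}^*) = q^{n-2}(\gf_{q^n}^* - \gf_{q^\ell}^*)$. Adding the last two yields $q^{n-2}(\gf_{q^n}^* - \gf_q^*)$, completing the identity. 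The main obstacle is not conceptual but organizational: bookkeeping the cancellations across $\gf_{q^n}^*$, $\gf_{q^\ell}^*$, and $\gf_q^*$, and making precise that the injectivity of $(d_1, d_2) \mapsto d_1 d_2$ rests squarely on the $\lambda_1 = 0$ property of $D_1$.
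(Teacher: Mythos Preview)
The paper does not actually supply a proof of this corollary; it is quoted from \cite[Corollary~3.2.2]{Pott95}, and the surrounding text only explains why the equivalence (i)$\Leftrightarrow$(iv) in Theorem~\ref{thm-equi} makes this product construction the natural reformulation of the Gordon--Mills--Welch and No constructions. Your direct group-ring verification is correct and is essentially the standard argument (and the one in the cited source): the injectivity of $(d_1,d_2)\mapsto d_1d_2$ follows from the $\lambda_1=0$ hypothesis on $D_1$, and the expansion of $(D_1D_1^{(-1)})(D_2D_2^{(-1)})$ with the identity $(\gf_{q^n}^*-\gf_{q^\ell}^*)(\gf_{q^\ell}^*-\gf_q^*)=(q^\ell-q)(\gf_{q^n}^*-\gf_{q^\ell}^*)$ collapses exactly as you wrote. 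There is nothing to add or correct.
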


\section{The multipliers of generalized difference sets from difference balanced functions}\label{sec-multiplier}

Recalling the equivalent relation between (i) and (iii) of Theorem~\ref{thm-equi}, the $d$-homogeneity of functions from $\gf_{p^n}^*$ to $\gf_p$ is associated with certain numerical multipliers of the corresponding generalized difference sets.

\begin{corollary}\label{coro-multi-dhom}
  Suppose that $D := \{(x, f(x)): x \in \gf_{p^n}^*\} \subseteq G = (\gf_{p^n}^*, \cdot) \times (\gf_p, +)$. Then $(1,t)$, where $t = 1, \ldots, p-1$, are numerical multipliers of $D$ with $D^{(1, t)} = (a,0) D$ for some $a \in \gf_{p^n}^*$ if and only if $f$ is a $d$-homogeneous function from $\gf_{p^n}^*$ to $\gf_p$. 
\end{corollary}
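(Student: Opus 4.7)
The corollary is a specialization of the equivalence (i) $\Leftrightarrow$ (iii) in Theorem~\ref{thm-equi} to the prime case $q = p$, and it is understood implicitly (as throughout Section~\ref{sec-multiplier}) that $D$ carries the generalized difference set structure of that theorem, i.e., that $f$ is difference balanced. My plan is to run through both directions and indicate where primality enters.

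For the forward direction, assume $f$ is $d$-homogeneous. Given $t \in \gf_p^*$, the condition $\gcd(d, p-1) = 1$ produces a unique $a \in \gf_p^*$ with $a^d = t$, so $f(ax) = tf(x)$. Reindexing $y = ax$ inside $D = \{(x, f(x))\}$ then gives $D^{(1,t)} = (a^{-1}, 0)D$, which is exactly the calculation used for (i) $\Rightarrow$ (iii) in Theorem~\ref{thm-equi}.

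For the converse, unfolding $D^{(1,t)} = (b_t, 0)D$ and matching coordinates yields $f(a_t x) = tf(x)$ for all $x \in \gf_{p^n}^*$, where $a_t := b_t^{-1}$. Iterating this identity $p-1$ times and invoking Fermat's little theorem produces $f(a_t^{p-1}x) = t^{p-1}f(x) = f(x)$, so $a_t^{p-1}$ lies in the multiplicative stabilizer of $f$. The difference balanced hypothesis makes this stabilizer trivial (for $c \neq 1$, $f(cx) - f(x)$ is balanced, hence not identically zero), so $a_t^{p-1} = 1$. Because $\gf_{p^n}^*$ is cyclic and $(p-1) \mid (p^n-1)$, it contains a unique subgroup of order $p-1$, namely $\gf_p^*$, forcing $a_t \in \gf_p^*$. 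Comparing $f(a_t a_{t'}x) = tt' f(x) = f(a_{tt'}x)$ (and using the same stabilizer triviality) shows $t \mapsto a_t$ is a homomorphism from $\gf_p^*$ into $\gf_p^*$ with trivial kernel, hence an automorphism of the cyclic group $\gf_p^*$, necessarily of the form $t \mapsto t^{d'}$ with $\gcd(d', p-1) = 1$. Setting $d \equiv (d')^{-1} \pmod{p-1}$ then yields $f(ax) = a^d f(x)$ for all $a \in \gf_p^*$, which is the required $d$-homogeneity.

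The main (and essentially only) nontrivial step is pinning $a_t$ inside $\gf_p^*$, and this is precisely where primality plays a role: the Fermat exponent $p - 1$ coincides with the order of $\gf_p^*$, and uniqueness of the order-$(p-1)$ subgroup in the cyclic group $\gf_{p^n}^*$ closes the argument. No new machinery beyond Theorem~\ref{thm-equi} is needed; the corollary is a convenient restatement packaging the equivalence in the multiplier language that drives the attack on the Gong--Song conjecture in the remainder of the section.
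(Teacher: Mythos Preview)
Your proposal is correct and matches the paper's approach: the corollary is stated without separate proof, as the direct specialization of the equivalence (i) $\Leftrightarrow$ (iii) in Theorem~\ref{thm-equi} to $q=p$, and you faithfully rerun that argument. Your write-up is in fact slightly more careful than the paper's proof of (iii) $\Rightarrow$ (i), since you explicitly invoke the difference balance hypothesis to justify that the multiplicative stabilizer of $f$ is trivial (the paper asserts the analogous step ``$t=1$ iff $a_t=1$'' without comment), but the underlying mechanism is identical.
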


In this section, we first determine the numerical multipliers of generalized $(p(p^n - 1); p, p^n -1; p^n - 1, p^{n-1}; 0, p^{n-1}- 1)$ difference sets in $G = (\gf_{p^n}^*, \cdot) \times (\gf_p,+)$ where $p$ is a prime. As an application of the multipliers, we are able to show that every difference balanced function from $\gf_{p^n}^*$ to $\gf_p$ must be $d$-homogeneous, which thus proves the Gong-Song conjecture for $q$ prime. 

To get information about $\chi(D)$, the decomposition of a prime $p$ into prime ideals in the ring $\bbZ[\zeta_w]$ is useful and actually explicitly known.

\begin{fact}\label{fact-decomp}\cite{Lan83}
 Let $p$ be a prime and $\zeta_w$ be a primitive $w$-th root of unity in $\bC$.  
  \begin{itemize}
    \item[(i)] If $w = p^e$, then the decomposition of the ideal $(p)$ in $\bQ(\zeta_w)$ into prime ideals is $(p) = (1 - \zeta_w)^{\phi(w)}$, where $\phi$ denotes the Euler's totient function. 
    \item[(ii)] If $(w,p) = 1$, then the prime ideal decomposition of $(p)$ is $(p) = \pi_1 \ldots \pi_v$, where the $\pi_i$'s are distinct prime ideals. Furthermore, $v = \phi(w)/o$ where $o$ is the order of $p$ modulo $w$. The field automorphism $\zeta_w \mapsto \zeta_w^p$ fixes the ideals $\pi_i$.
    \item[(iii)] If $w = p^e w'$ with $(w',p) = 1$, then the prime ideal $(p)$ decomposes as $(p) = (\pi_1 \ldots \pi_v)^{\phi(p^e)}$, where the $\pi_i$'s are distinct prime ideals and $v = \phi(w')/o$. Here $o$ is the order of $p$ modulo $w'$. If $t$ is an integer not divisible by $p$ and $t \equiv p^s \pmod{w'}$ for a suitable integer $s$, then the field automorphism $\zeta_w \mapsto \zeta_w^t$ fixes the ideals $\pi_i$. 
  \end{itemize}
\end{fact}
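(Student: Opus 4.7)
The plan is to invoke the standard machinery of ramification and decomposition in cyclotomic extensions $\bQ(\zeta_w)/\bQ$, whose Galois group is canonically $(\bbZ/w\bbZ)^*$ via $\sigma_t: \zeta_w \mapsto \zeta_w^t$ for $\gcd(t,w)=1$. The three parts are the pure ramified case, the pure unramified case, and the combined case, so I would address them in that order and glue via tower multiplicativity.

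For (i), the key input is the cyclotomic polynomial identity $\Phi_{p^e}(X) = \frac{X^{p^e}-1}{X^{p^{e-1}}-1}$, whose evaluation at $X=1$ gives $\Phi_{p^e}(1)=p$. Writing $\Phi_{p^e}(X) = \prod_{\zeta}(X-\zeta)$ where $\zeta$ ranges over primitive $p^e$-th roots of unity, and noting that $(1-\zeta)/(1-\zeta_w)$ is a unit in $\bbZ[\zeta_w]$ for any two such primitives (standard cyclotomic units argument), I conclude that $N_{\bQ(\zeta_w)/\bQ}(1-\zeta_w) = p$ and that the ideal $(1-\zeta_w)$ is prime of norm $p$. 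Equating ideal norms in $(p) = \prod_\zeta(1-\zeta) = u\cdot(1-\zeta_w)^{\phi(p^e)}$ with $u$ a unit gives the claimed total ramification.

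For (ii), the hypothesis $\gcd(w,p)=1$ implies $\Phi_w(X) \bmod p$ is separable (since $X^w-1$ is), so $p$ is unramified in $\bQ(\zeta_w)$ by Dedekind's theorem applied to $\bbZ[\zeta_w]$ (which is the full ring of integers). The Frobenius at any prime $\pi_i$ above $p$ is the unique element of the decomposition group acting as $x\mapsto x^p$ on the residue field; lifted to the Galois group, this is $\sigma_p$, whose order is exactly $o$. Since the extension is abelian, the decomposition group $\langle \sigma_p\rangle$ is the same for every $\pi_i$ and has order equal to the residue degree $f=o$; the number of primes is then $v=\phi(w)/o$, and each $\pi_i$ is fixed setwise by $\sigma_p$ (hence by $\sigma_{p^s}$ for every $s$).

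For (iii), I would combine (i) and (ii) in the tower $\bQ \subset \bQ(\zeta_{w'}) \subset \bQ(\zeta_w)$. The bottom step is unramified at $p$ with decomposition as in (ii): $(p) = \pi_1'\cdots\pi_v'$. The top step $\bQ(\zeta_w)/\bQ(\zeta_{w'})$ is obtained by adjoining $\zeta_{p^e}$ and is totally ramified at every $\pi_i'$ with ramification index $\phi(p^e)$; multiplicativity of $e$ and $f$ in towers then yields $(p) = (\pi_1\cdots\pi_v)^{\phi(p^e)}$ with $v=\phi(w')/o$. For the multiplier-fixing claim, if $t \equiv p^s \pmod{w'}$ with $p\nmid t$, then $\sigma_t$ and $\sigma_{p^s}$ agree on $\bQ(\zeta_{w'})$, so by (ii) they permute the primes above $p$ in $\bQ(\zeta_{w'})$ identically (trivially); ramification then forces $\sigma_t$ to fix each $\pi_i$ setwise in $\bQ(\zeta_w)$.

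The main obstacle is the last gluing step: one needs that $\bQ(\zeta_{p^e})$ and $\bQ(\zeta_{w'})$ are linearly disjoint over $\bQ$ (which follows from $\phi(p^e)\phi(w')=\phi(w)$) and that the ramification of $p$ in $\bQ(\zeta_{p^e})/\bQ$ transfers intact to the compositum. This is where a clean reference to the conductor-discriminant formula or to Proposition~2.3 in Chapter~I of Lang's \emph{Cyclotomic Fields} would be used rather than a hands-on computation.
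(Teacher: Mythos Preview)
The paper does not prove this statement: it is presented as a cited \textbf{Fact} from Lander~\cite{Lan83} and is invoked as standard algebraic number theory background, with no proof or sketch given. There is therefore no ``paper's own proof'' to compare your proposal against.

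That said, your sketch is a correct outline of the standard argument and is how one would prove it from scratch: total ramification in the $p^e$-cyclotomic layer via $\Phi_{p^e}(1)=p$ and cyclotomic units; unramified splitting in the coprime layer with Frobenius $\sigma_p$ generating the decomposition group of order $o$; and the tower/compositum argument for the mixed case, using uniqueness of the prime above each $\pi_i'$ in a totally ramified extension to deduce that $\sigma_t$ fixes each $\pi_i$. The only comment is cosmetic: in the last step your phrasing ``permute the primes above $p$ in $\bQ(\zeta_{w'})$ identically (trivially)'' could be sharpened to ``fix each $\pi_i'$'', and then the total ramification above immediately gives the conclusion since each $\pi_i'$ has a unique prime lying over it in $\bQ(\zeta_w)$.
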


In our proof later, we also need the following lemma.

\begin{lemma}\label{lem-ma}~\cite{Ma85,Pott95}
  Let $G \cong H \times E$ be a finite abelian group with a cyclic Sylow $p$-subgroup of order $p^s$. Let $\chi$ be a character of order $p^s$. If $\gamma\chi(A) \equiv 0 \pmod{p^r}$ for all characters $\gamma$ of $H$, then we have
  $$
  A = p^r X + E' Y,
  $$
  where $A, X, Y \in \bbZ[G]$ and $E'$ is the unique subgroup of $G$ of order $p$, i.e., the coefficients in $A$ are congruent modulo $p^r$ on cosets of $E'$.
\end{lemma}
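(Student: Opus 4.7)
The plan is to reduce Ma's lemma to the case of a single cyclic $p$-group via Fourier inversion over $H$, and then invoke the explicit description of the kernel of $\chi$ acting on $\bbZ[E]$.

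First, since $\chi$ has order $p^s$ and $|H|$ is coprime to $p$ (because $E$ is the Sylow $p$-subgroup of $G$), the restriction $\chi|_H$ is forced to be trivial. Decomposing $A$ along the cosets of $E$ as $A = \sum_{h \in H} h A_h$ with $A_h \in \bbZ[E]$, we obtain $(\gamma\chi)(A) = \sum_{h \in H} \gamma(h)\, \chi(A_h)$ for every character $\gamma$ of $H$. Applying the inverse Fourier transform on $H$, for each fixed $h_0 \in H$ one has
$$
|H|\, \chi(A_{h_0}) \;=\; \sum_{\gamma} \overline{\gamma(h_0)}\, (\gamma\chi)(A) \;\equiv\; 0 \pmod{p^r}.
$$
A priori this congruence lives in the cyclotomic ring $\bbZ[\zeta_N]$ with $N$ the exponent of $G$, but because $\bbZ[\zeta_N]$ is free of finite rank as a $\bbZ[\zeta_{p^s}]$-module (an unramification statement at $p$ provided by Fact~\ref{fact-decomp}), the induced map $\bbZ[\zeta_{p^s}]/p^r \hookrightarrow \bbZ[\zeta_N]/p^r$ is injective, so the congruence already holds in $\bbZ[\zeta_{p^s}]$. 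Since $|H|$ is a unit modulo $p^r$, we conclude $\chi(A_h) \equiv 0 \pmod{p^r}$ in $\bbZ[\zeta_{p^s}]$ for every $h \in H$.

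Next, identify the kernel of $\chi$ acting on $\bbZ[E]$. Writing $E = \langle g \rangle$ and $\bbZ[E] \cong \bbZ[x]/(x^{p^s}-1)$, the map $\chi$ factors as reduction modulo the cyclotomic polynomial $\Phi_{p^s}(x) = 1 + x^{p^{s-1}} + \cdots + x^{(p-1)p^{s-1}}$. Evaluated at $g$, this polynomial gives exactly the subgroup sum $E' \in \bbZ[E]$, so $\chi: \bbZ[E] \to \bbZ[\zeta_{p^s}]$ is surjective with kernel $(E')$. Writing $\chi(A_h) = p^r \beta_h$ and lifting $\beta_h = \chi(B_h)$, we conclude $A_h - p^r B_h \in (E')$, that is, $A_h = p^r B_h + E' Y_h$ for some $B_h, Y_h \in \bbZ[E]$.

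Summing over $h \in H$ finally yields $A = p^r X + E' Y$ with $X = \sum_{h} h B_h$ and $Y = \sum_{h} h Y_h$, which is the desired decomposition. The main obstacle is the Fourier-inversion step: one must ensure that divisibility by $p^r$ correctly descends from $\bbZ[\zeta_N]$ to $\bbZ[\zeta_{p^s}]$ and, simultaneously, that $|H|$ remains invertible modulo $p^r$. Both requirements hinge on the Sylow hypothesis that $|H|$ is coprime to $p$; without it, neither the descent nor the inversion would be available, and the cleaner cyclic-$p$-group step would be out of reach.
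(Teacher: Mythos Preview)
The paper does not actually supply a proof of Lemma~\ref{lem-ma}; it simply cites it from~\cite{Ma85,Pott95}. So there is no in-paper argument to compare against, and your proposal should be judged on its own terms.

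Your argument is the standard one and is correct. A couple of minor remarks: the appeal to Fact~\ref{fact-decomp} for the descent step is a bit oblique, since what you really use is that $\bbZ[\zeta_N]$ is free over $\bbZ[\zeta_{p^s}]$ (with basis $1,\zeta_m,\ldots,\zeta_m^{\phi(m)-1}$ for $m=|H|$), which immediately gives $p^r\bbZ[\zeta_N]\cap\bbZ[\zeta_{p^s}]=p^r\bbZ[\zeta_{p^s}]$; you might state that directly rather than gesture at prime decompositions. Also, the identification of $\ker(\chi:\bbZ[E]\to\bbZ[\zeta_{p^s}])$ with the ideal $E'\cdot\bbZ[E]$ is exactly right, since the kernel is generated by the image of $\Phi_{p^s}(x)$ in $\bbZ[x]/(x^{p^s}-1)$ and $\Phi_{p^s}(g)=\sum_{j=0}^{p-1} g^{jp^{s-1}}=E'$. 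With those two points made explicit the proof is complete and matches the treatment in the cited references.
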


Now we determine the numerical multipliers of generalized difference sets from difference balanced functions. We remark that the proof idea comes from the proofs of typical multiplier theorems (see for instance~\cite{BJL99}).

\begin{theorem}\label{thm-multiplier}
  Let $D$ be a generalized difference set with parameters $(p(p^n-1); p, p^n - 1; p^n - 1, p^{n-1}; 0, p^{n-1}- 1)$ in the group $G = N_2 \times N_1$, where $N_2 = (\gf_{p^n}^*, \cdot)$, $N_1 = (\gf_p,+)$ and $p$ is a prime. Then $(1,t_i)$ are numerical multipliers of $D$, where $t_i$ are the integers
  $$
  t_i := p + i(p^n - 1), \quad i = 1, \ldots, p-1.  
  $$
 \end{theorem}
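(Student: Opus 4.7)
The plan is to adapt the standard multiplier theorem argument, combining character theory on $G = N_2 \times N_1$ with number theory in the cyclotomic field $\bQ(\zeta_{p(p^n-1)})$.

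First, by the Chinese Remainder Theorem (using $\gcd(p, p^n - 1) = 1$), there is a unique Galois automorphism $\tau \in \mathrm{Gal}(\bQ(\zeta_{p(p^n-1)})/\bQ)$ sending $\zeta_p \mapsto \zeta_p^{t_i}$ and fixing $\zeta_{p^n - 1}$. For any character $\chi = \chi_2 \chi_1$ of $G$ (with $\chi_j$ a character of $N_j$), a direct computation gives $\chi(D^{(1, t_i)}) = \sum_{(x,y) \in D} \chi_2(x) \chi_1(t_i y) = \tau(\chi(D))$. Applying Fact~\ref{fact-decomp}(iii) with $w = p(p^n - 1)$ and $w' = p^n - 1$, any integer $t$ representing $\tau$ satisfies $t \equiv 1 = p^0 \pmod{p^n - 1}$, so $\tau$ fixes every prime ideal $\pi_j$ above $p$ in $\bbZ[\zeta_{p(p^n-1)}]$. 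Together with (\ref{eqn-char}), which forces $|\chi(D)|^2 \in \{1, p^n\}$ for every non-principal $\chi$ with $\chi(D) \ne 0$, the principal ideal $(\chi(D))$ is a product of the $\pi_j$ and hence $\tau$-invariant. Therefore $u_\chi := \tau(\chi(D))/\chi(D)$ is a unit in $\bbZ[\zeta_{p(p^n-1)}]$; since $\tau$ commutes with complex conjugation, $|u_\chi| = 1$ in every complex embedding, so Kronecker's theorem yields that $u_\chi$ is a root of unity.

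To extract the multiplier, consider the group-ring element $E := D^{(1, t_i)} D^{(-1)} - (a, 0)\, D D^{(-1)}$ for a candidate $a \in \gf_{p^n}^*$. The character values $\chi(E)$ vanish for $\chi = \chi_0$ and whenever $\chi_1 = \chi_0$, as directly verified from (\ref{eqn-char}): both summands of $E$ agree under these characters. When $\chi_2 = \chi_0$ and $\chi_1 \ne \chi_0$, using $|D_b| = p^{n-1}$ for $b \in \gf_p^*$ and $|D_0| = p^{n-1} - 1$ (Remark~\ref{rmk-balance}), direct summation gives $\chi(D) = -1 = \tau(\chi(D))$ and hence $\chi(E) = 0$ regardless of $a$. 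Only when $\chi_1, \chi_2$ are both non-principal does a nontrivial contribution $\chi(E) = (u_\chi - \chi_2(a)) p^n$ arise; since $u_\chi - \chi_2(a)$ is an algebraic integer, every character value of $E$ is divisible by $p^n$ in $\bbZ[\zeta_{p(p^n-1)}]$.

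Finally, invoke Lemma~\ref{lem-ma} with $H = N_2$, $E' = N_1$ (the cyclic Sylow $p$-subgroup of $G$, of order $p$), and $r = n$: the uniform divisibility of character values forces $E = p^n X + N_1 Y$ for some $X, Y \in \bbZ[G]$. To conclude $E = 0$ for an appropriate $a$, I would combine this structure with the explicit expansion of $(a, 0)\, D D^{(-1)}$ from (\ref{eqn-gds}), the non-negativity and bound $\leq p^n - 1$ on the coefficients of $D^{(1, t_i)} D^{(-1)}$, and the identity $\rho(E) = 0$ obtained via the projection $\rho: G \to G/N_1 \cong N_2$ (which follows from $\rho(D^{(1, t_i)}) = \rho(D) = N_2$). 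The crux and main obstacle is to select $a$ so that $u_\chi = \chi_2(a)$ holds simultaneously for every character with $\chi_1, \chi_2$ both non-principal; establishing the consistency of this root of unity is where the generalized difference set structure, the Galois analysis, and Lemma~\ref{lem-ma} must combine to close the argument.
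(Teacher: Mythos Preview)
Your setup follows the classical multiplier-theorem strategy and is correct as far as it goes: you rightly observe that $\chi(D^{(1,t_i)}) = \tau(\chi(D))$ for the Galois automorphism $\tau$ fixing $\zeta_{p^n-1}$, that $\tau$ fixes every prime above $p$ by Fact~\ref{fact-decomp}(iii), and hence that $u_\chi := \tau(\chi(D))/\chi(D)$ is a root of unity whenever $\chi(D)\neq 0$. But the proof is not complete, and you say so yourself: selecting a single $a\in\gf_{p^n}^*$ with $u_\chi = \chi_2(a)$ for \emph{all} characters with $\chi_1,\chi_2$ non-principal is, in your words, ``the crux and main obstacle,'' and you do not resolve it. Knowing that each $u_\chi$ is \emph{some} root of unity does not produce a common group element realizing them all; that coherence is exactly the content of the theorem, so the argument as written is a sketch with the essential step missing.

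The paper closes this gap by a route that avoids guessing $a$ in advance. It sets $F := D^{(1,t)}D^{(-1)} - p^{n-1}G + N_2$ (so your divisibility step becomes $\chi(F)\equiv 0\pmod{p^n}$ for all $\chi$), and then---this is the idea you are missing---computes $FF^{(-1)}$ \emph{exactly} by expanding
\[
\bigl(D^{(1,t)}(D^{(-1)})^{(1,t)}\bigr)\bigl(DD^{(-1)}\bigr)=\bigl(D^{(1,t)}D^{(-1)}\bigr)\bigl((D^{(-1)})^{(1,t)}D\bigr)
\]
with the difference-set equation~(\ref{eqn-gds}) together with $DN_1=G$ and $DN_2=p^{n-1}G-N_2$ (the latter from Theorem~\ref{thm-balance}). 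The result is the closed formula $FF^{(-1)}=p^{2n}-p^{2n-1}N_1$. Now Lemma~\ref{lem-ma} says the coefficients of $F$ are constant modulo $p^n$ on each coset of $N_1$; combined with the lower bound $f_g\ge -p^{n-1}$ (immediate from the definition of $F$) and the identity $\sum_g f_g^2 = p^{2n}-p^{2n-1}$, a short inequality forces $F=p^n u - p^{n-1}(gN_1)$ for a \emph{single} element $u\in G$. One further group-ring manipulation (multiply $D^{(1,t)}D^{(-1)}$ by $D$ and use (\ref{eqn-gds}) again) then gives $D^{(1,t)}=uD$. In short, the paper replaces the unproved consistency of your $u_\chi$'s by an explicit determination of $F$ via $FF^{(-1)}$ and a coefficient bound; this is the substantive step your proposal lacks.
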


\begin{proof}
  Let $w = (p^n - 1)p$ and $t \equiv p \pmod{p^n - 1}$, then by (iii) of Fact~\ref{fact-decomp} and (\ref{eqn-char}), we have
  $$
  \chi(D^{(1,t)}) \chi(D^{(-1)}) \equiv \chi(D) \chi(D^{(-1)}) \equiv 0 \pmod{p^n}
  $$
  for characters $\chi|N_1 \ne \chi_0$ and $\chi|N_2 \ne \chi_0$. Since $DN_1 = D^{(1, t)} N_1 = G$, if $\chi|N_1 = \chi_0$ and $\chi \ne \chi_0$, we get $\chi(D^{(1, t)}) = \chi(D) = 0$. On the other hand, note that $D^{(1, t)}N_2 = DN_2 = p^{n-1}G - N_2$ because $f$ is balanced by Theorem~\ref{thm-balance} and Remark~\ref{rmk-balance}. Then $\chi(D^{(1, t)}) = \chi(D) = - 1$ if $\chi|N_2 = \chi_0$ and $\chi \ne \chi_0$. Thus, by (\ref{eqn-char}), we have
  \begin{equation}\label{eqn-char2}
    \chi(D^{(1, t)}) \chi(D^{(-1)}) \equiv \left\{ \begin{array}{cl}
      0 \bmod{p^n} & \textrm{ if $\chi|N_1 \ne \chi_0$ and $\chi|N_2 \ne \chi_0$}, \\
      0 \bmod{p^n} & \textrm{ if $\chi|N_1 = \chi_0$ and $\chi \ne \chi_0$}, \\
      1 \bmod{p^n} & \textrm{ if $\chi|N_2 = \chi_0$ and $\chi \ne \chi_0$}, \\
      1 \bmod{p^n} & \textrm{ if $\chi = \chi_0$}.
    \end{array}\right.
  \end{equation}

  Suppose that $D^{(1, t)}  D^{(-1)} = p^{n-1} G - N_2 + F$, where $F$ is a suitable element in $\bbZ[G]$. Note that in $\bbZ[G]$ all the coefficients of $F$ are greater than or equal to $-p^{n-1}$. It then follows from (\ref{eqn-char2}) that   
  \begin{equation}\label{eqn-fmod}
\chi(F) \equiv 0 \pmod{p^n}
  \end{equation}
  for all characters $\chi$.

  Now, we calculate $D^{(1, t)}(D^{(-1)})^{(1,t)} D D^{(-1)}$ in two different ways: one by $(D^{(1, t)}(D^{(-1)})^{(1,t)}) (D D^{(-1)})$, and the other by $(D^{(1, t)}D^{(-1)}) ( (D^{(-1)})^{(1,t)} D)$. With (\ref{eqn-gds}) we then have
\begin{eqnarray}\label{eqn-fourd}
  \lefteqn{(p^n + p^{n-1} G - N_2 - p^{n-1} N_1) (p^n + p^{n-1} G - N_2 - p^{n-1} N_1)} \nonumber \\
  & = & (p^{n-1} G - N_2 + F) (p^{n-1} G - N_2 + F^{(-1)}). 
\end{eqnarray}
Since $D^{(1, t)} G = DG = (p^n - 1)G$, we have $( D^{(1, t)} G) D^{(-1)} =  ( D G) D^{(-1)}$. We compute the left-hand side as
$$
(D^{(1,t)} G) D^{(-1)} = (D^{(1,t)} D^{(-1)} ) G =  (p^{n-1}G - N_2 + F) G;
$$
and compute the right-hand side as
$$
( D G) D^{(-1)} = (D D^{(-1)}) G = (p^n + p^{n-1}G - N_2 - p^{n-1} N_1 ) G.
$$
Thus, we have
\begin{equation*}
  (p^{n-1}G - N_2 + F) G = (p^n + p^{n-1}G - N_2 - p^{n-1} N_1) G, 
\end{equation*}
which implies that
\begin{equation*}
  FG = p^n G - p^{n-1}N_1 G = p^{n-1} (pG - N_1 G) = 0.
\end{equation*}
By computing $D^{(1, t)}G (D^{(-1)})^{(1,t)} = DG (D^{(-1)})^{(1,t)}$ in the same way for both sides, we can also get 
\begin{equation}\label{eqn-fourd1}
  F^{(-1)} G = FG = 0.
\end{equation}
Recall that $D^{(1, t)} N_2 = DN_2 = p^{n-1} G - N_2$, then we have $(D^{(1, t)} N_2 ) D^{(-1)} = ( DN_2 ) D^{(-1)}$, and further $ ( D^{(1,t)} D^{(-1)}) N_2 = (DD^{(-1)}) N_2$ by changing the order on both sides. It then follows that
\begin{equation*}
  (p^{n-1}G - N_2 + F) N_2 = (p^n + p^{n-1}G - N_2 - p^{n-1} N_1) N_2 ,
\end{equation*}
and thus,
$$
FN_2 = p^n N_2 - p^{n-1} N_1 N_2.
$$
Similarly, by $(D^{(1,t)} N_2) (D^{(-1)})^{(1,t)} = (DN_2 ) (D^{(-1)})^{(1,t)}$ we get
\begin{equation}\label{eqn-fourd2}
  F^{(-1)}N_2  = FN_2 = p^n N_2 - p^{n-1} N_1 N_2.
\end{equation}
With (\ref{eqn-fourd1}) and (\ref{eqn-fourd2}), simplifying (\ref{eqn-fourd}), we have
\begin{equation}\label{eqn-ffinv}
  F F^{(-1)} = p^{2n} - p^{2n-1} N_1,
\end{equation}
where $F \in \bbZ[G]$ and $\chi(F) \equiv 0 \pmod{p^n}$ for all characters $\chi$ of $G$. 

Since $N_1 = (\gf_p, +)$ is a cyclic group of prime order $p$, the integers which are relatively prime to $p$ and congruent $p$ modulo $p^n - 1$ are precisely the $t_i$'s of the form $t_i = p + i(p^n - 1)$ with $i = 1, \ldots, p - 1$. Let $F = \sum_{g \in G} f_g g$, then using Lemma~\ref{lem-ma} and (\ref{eqn-fmod}), we know that all $f_g$'s are congruent modulo $p^n$ on cosets of $N_1$. Recall that $FF^{(-1)} = p^{2n} - p^{2n-1}N_1 $, then the coefficient of the identity in $FF^{(-1)}$ is 
\begin{equation}\label{eqn-ffcoe}
  \sum_{g \in G} f_g^2 = p^{2n} - p^{2n-1}, 
\end{equation}
and $FF^{(-1)}$ is not constant on the cosets of $N_1$. This means that there must be at least one coset $ g N_1$ with $g \in G$ such that $f_u \ne f_v$ for $u, v \in g N_1 $. Let $f_u \equiv y \pmod{p^n}$ for all $u \in g N_1 $, where $y$ is the smallest of the $f_u$'s. Then there is at least one $u \in g N_1 $ such that the coefficient $f_u \ge y + p^n$. Since by assumption $f_g \ge - p^{n-1}$ for all $g \in G$, we have $y \ge - p^{n-1}$ and $(y + p^n)^2 > y^2$. Thus, we have
\begin{eqnarray*}
  \lefteqn{\sum_{u \in g N_1} f_u^2} \\
  & \geq & (- p^{n-1} + \delta)^2 (p-1) + (- p^{n-1} + \delta + p^n)^2 \\
  & = & p^{2n} - p^{2n - 1} + \delta^2 p \\ 
  & \geq & p^{2n} - p^{2n - 1},
\end{eqnarray*}
where $\delta \geq 0$ and the equality holds if and only if $\delta = 0$. Because of (\ref{eqn-ffcoe}), we have $f_u = p^n - p^{n-1}$ for some $u \in g N_1 $ and $f_v = -p^{n-1}$ for all the other $v \in g N_1$ with $v \ne u$.

To sum up, we proved that $F = p^n u - p^{n-1} g N_1 $ is the only solution to (\ref{eqn-ffinv}), and we now know that $D^{(1,t)} D^{(-1)} = p^{n-1} G - N_2 + p^n u - p^{n-1} g N_1 $. Calculating $D^{(1, t)} D^{(-1)} D$ as $(D^{(1, t)} D^{(-1)} ) D = D^{(1, t)} ( D^{(-1)} D) $, we have
\begin{eqnarray*}
  \lefteqn{ (p^{n-1} G - N_2 + p^n u - p^{n-1} g N_1 ) D} \\
  & = & D^{(1, t)} ( p^n + p^{n-1} G - N_2 - p^{n-1} N_1 ),  
\end{eqnarray*}
and further $D^{(1, t)} = u D$ because $D^{(1,t)} N_1 = D N_1 = G$, $D^{(1,t)} N_2 = DN_2 = p^{n-1}G - N_2$ and $D^{(1,t)} G = DG = (p^n - 1) G$ as we stated earlier, which completes the proof. 
\end{proof}

 %
 %

As an application of the multipliers, we now can prove the conjecture by Gong and Song~\cite{GS06} for $q$ prime.

\begin{theorem}\label{thm-main}
  Suppose that a function $f : \gf_{p^n}^* \rightarrow \gf_p$ is difference balanced. Then $f$ must be $d$-homogeneous, i.e., $f(ax) = a^d f(x)$ for each $a \in \gf_p^*$ with $\gcd(d, p-1) = 1$. 
\end{theorem}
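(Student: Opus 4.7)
The plan is to derive $d$-homogeneity as a direct consequence of the multiplier theorem (Theorem~\ref{thm-multiplier}) together with the characterization (iii)~$\Rightarrow$~(i) of Theorem~\ref{thm-equi}. Since $f$ is difference balanced, the set $D = \{(x, f(x)) : x \in \gf_{p^n}^*\} \subseteq G = N_2 \times N_1$, with $N_2 = (\gf_{p^n}^*, \cdot)$ and $N_1 = (\gf_p, +)$, is a generalized difference set with the parameters appearing in Theorem~\ref{thm-equi}(iii). By Theorem~\ref{thm-multiplier}, the integers $t_i = p + i(p^n - 1)$ for $i = 1, \ldots, p-1$ are numerical multipliers of $D$, so $D^{(1, t_i)} = u_i D$ for some $u_i = (a_i, b_i) \in N_2 \times N_1$. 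Since $t_i \equiv -i \pmod{p}$, as $i$ varies these actions $(1, t_i)$ realize the action $(1, s)$ for every $s \in \gf_p^*$.

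To invoke Theorem~\ref{thm-equi}(iii)~$\Rightarrow$~(i), I still need to show that each translation element lies in $N_2 \times \{0\}$, i.e., that $b_i = 0$. Here I will use the balance property established in Theorem~\ref{thm-balance} (after the normalization in Remark~\ref{rmk-balance}). Apply the canonical epimorphism $\rho_{N_2} : G \to G/N_2 \cong N_1$. A direct count gives $\rho_{N_2}(D) = p^{n-1} N_1 - [0]$, because $f$ takes the value $0$ exactly $p^{n-1} - 1$ times and each nonzero value exactly $p^{n-1}$ times. The same count applied to $D^{(1, t_i)} = \{(x, t_i f(x))\}$, using that $y \mapsto t_i y$ permutes $\gf_p^*$ and fixes $0$, yields $\rho_{N_2}(D^{(1, t_i)}) = p^{n-1} N_1 - [0]$ as well. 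On the other hand, $\rho_{N_2}(u_i D) = [b_i] \cdot (p^{n-1} N_1 - [0]) = p^{n-1} N_1 - [b_i]$. Comparing forces $b_i = 0$, and hence $D^{(1, t_i)} = (a_i, 0) D$, which is exactly the hypothesis of Theorem~\ref{thm-equi}(iii). The conclusion that $f$ is $d$-homogeneous then follows immediately.

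The main obstacle is precisely this last step: the multiplier theorem only guarantees that the translate lies in $G$, whereas $d$-homogeneity requires it to lie in the subgroup $N_2 \times \{0\}$. The balance property from Section~\ref{sec-balance} is what makes this normalization automatic; without it, one could at best conclude that $f$ coincides with a $d$-homogeneous function up to an affine shift in the $\gf_p$-coordinate, which is why the balance theorem was needed as a prerequisite. Everything else in the argument is either a bookkeeping translation between group-ring and functional language or the direct invocation of the two previously established results.
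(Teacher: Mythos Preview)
Your proof is correct and follows the paper's route: the paper's own proof is a one-line appeal to Theorem~\ref{thm-multiplier} and Corollary~\ref{coro-multi-dhom} (which is the $q=p$ instance of the equivalence (i)$\Leftrightarrow$(iii) in Theorem~\ref{thm-equi}). Your additional step---checking that the translation $u_i$ lies in $N_2\times\{0\}$ via the projection $\rho_{N_2}$ and the balance normalization---is a genuine detail that the paper leaves implicit; note that the ingredients are already present inside the proof of Theorem~\ref{thm-multiplier}, where the identity $D^{(1,t)}N_2 = DN_2 = p^{n-1}G - N_2$ (which uses Theorem~\ref{thm-balance} and Remark~\ref{rmk-balance}) together with $D^{(1,t)}=uD$ forces $uN_2=N_2$, hence $u\in N_2$. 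Your projection argument and this coset argument are the same observation phrased dually, so you have not deviated from the paper's strategy but rather made explicit a step the paper takes for granted.
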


\begin{proof}
  The conclusion follows from Theorem~\ref{thm-multiplier} and Corollary~\ref{coro-multi-dhom}. 
\end{proof}

Because of the result above, we have a better understanding of difference balanced functions from $\gf_{p^n}^*$ to $\gf_p$, and some previous results in the literature are now more transparent. 

\begin{corollary}\label{coro-main}
  Suppose that $f: \gf_{p^n}^* \rightarrow \gf_p$ is difference balanced where $p$ is a prime, or equivalently, the sequence ${\bf s} = (s_i)$, where $s_i = f(\theta^i)$ and $\theta$ is a primitive element of $\gf_{p^n}$, has the ideal two-level autocorrelation, then the following hold:
  \begin{itemize}
    \item[--] $f$ is two-tuple balanced;
    \item[--] the preimage set 
      $$
      D_b := \{ x \in \gf_{p^n}^*: f(x) = b\}
      $$
      is a cyclic $\left(\frac{p^n-1}{p-1}, p-1, p^{n-1}, p^{n-2} \right)$ difference set relative to $\gf_p^*$ for each $b \in \gf_p^*$, and is a cyclic divisible difference set with parameters $\left( \frac{p^n-1}{p-1}, p-1, p^{n-1}-1, p^{n-1} -1, p^{n-2} - 1\right)$ for $b = 0$. Furthermore, the projection $\rho(D_b)$ is a cyclic difference set with Singer parameters $\left(\frac{p^n-1}{p-1}, p^{n-1}, p^{n-2}(p-1) \right)$ for each $b \in \gf_p^*$ and $\rho(D_0)$ is the complement
      of $\rho(D_b)$, where $\rho: \gf_{p^n}^* \rightarrow \gf_{p^n}^*/\gf_p^*$ is the canonical epimorphism.
  \end{itemize}
\end{corollary}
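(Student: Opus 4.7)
The plan is to assemble Corollary~\ref{coro-main} from the machinery already set up, rather than doing any new hard work. First I would invoke Theorem~\ref{thm-main} to upgrade the hypothesis ``$f$ is difference balanced'' to ``$f$ is $d$-homogeneous and difference balanced'', i.e.\ condition (i) of Theorem~\ref{thm-equi}. The implication (i) $\Rightarrow$ (ii) of Theorem~\ref{thm-equi} then immediately gives two-tuple balance of $f$, and the implication (i) $\Rightarrow$ (iv) gives, for each $b \in \gf_p^*$, that $D_b = \{x \in \gf_{p^n}^* : f(x) = b\}$ is a $\bigl((p^n-1)/(p-1), p-1, p^{n-1}, p^{n-2}\bigr)$ difference set in $\gf_{p^n}^*$ relative to $\gf_p^*$; cyclicity is automatic since $\gf_{p^n}^*$ is cyclic.

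Second, $d$-homogeneity says $f(ax) = a^d f(x)$ for $a \in \gf_p^*$, so $a D_0 = D_0$ (hence $D_0$ is a union of cosets of $\gf_p^*$), while $a D_b = D_{a^d b}$ for $b \neq 0$. To establish the divisible difference set structure of $D_0$ with parameters $\bigl((p^n-1)/(p-1), p-1, p^{n-1}-1, p^{n-1}-1, p^{n-2}-1\bigr)$, I would compute $D_0 D_0^{(-1)}$ directly from $D_0 = \gf_{p^n}^* - \sum_{b \in \gf_p^*} D_b$ together with the relative difference set equation for each $D_b$, exactly in the style of equation~(\ref{eqn-equi31}) in the proof of Theorem~\ref{thm-equi}; the desired group ring identity drops out.

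Finally, for the Singer projections, I would apply the canonical epimorphism $\rho: \gf_{p^n}^* \to \gf_{p^n}^*/\gf_p^*$ to the identity $D_b D_b^{(-1)} = p^{n-1} + p^{n-2}(\gf_{p^n}^* - \gf_p^*)$; since every element of $\gf_p^*$ maps to the identity coset, the right hand side collapses to $p^{n-2} + (p-1)p^{n-2}\cdot \rho(\gf_{p^n}^*)$, exhibiting $\rho(D_b)$ as a Singer $\bigl((p^n-1)/(p-1), p^{n-1}, (p-1)p^{n-2}\bigr)$ difference set. That $\rho(D_0)$ is its complement follows from a short counting argument: $\rho$ is injective on each $D_b$ for $b \neq 0$ (two elements of $D_b$ cannot differ by a nontrivial element of $\gf_p^*$), whereas $D_0$ is a $\gf_p^*$-invariant union of cosets, so the sizes $p^{n-1}$ and $(p^{n-1}-1)/(p-1)$ add up to $(p^n-1)/(p-1)$, and the images are disjoint since $x \in D_0$ and $ax \in D_b$ with $a \in \gf_p^*$ would force $b = f(ax) = a^d f(x) = 0$. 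I do not anticipate any real obstacle here: all the substantive work has been done in Sections~\ref{sec-equi} and~\ref{sec-multiplier}, and only careful bookkeeping of parameters remains.
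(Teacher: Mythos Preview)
Your proposal is correct and follows essentially the same route the paper intends: the corollary is stated without an explicit proof because it is meant to be read off from Theorem~\ref{thm-main} combined with Theorem~\ref{thm-equi} and Remark~\ref{rmk-cds}, and that is precisely what you do. The only difference is that you re-derive the divisible difference set structure of $D_0$ and the Singer projections by hand, whereas the paper has already recorded these (for general $q$, assuming $d$-homogeneity) in Remark~\ref{rmk-cds} and in equation~(\ref{eqn-equi31}); citing those would shorten your write-up but changes nothing of substance.
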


\section{Conclusions}\label{sec-con}

It seems that the method in Section~\ref{sec-multiplier} does not work when $q$ is a prime power since the structure of the automorphism group $\aut(\gf_q, +)$ is more complicated. By Theorem~\ref{thm-main}, difference balanced functions from $\gf_{p^n}^*$ to $\gf_p$ are now characterized by the $d$-homogeneity. It may be possible and quite interesting to classify all existing $p$-ary sequences with the ideal two-level autocorrelation, or find new inequivalent constructions of such $p$-ary sequences.

\section*{Acknowledgement}
Qi Wang's research was supported by the Alexander von Humboldt (AvH) Stiftung/Foundation.



\providecommand{\bysame}{\leavevmode\hbox to3em{\hrulefill}\thinspace}
\providecommand{\MR}{\relax\ifhmode\unskip\space\fi MR }
\providecommand{\MRhref}[2]{%
  \href{http://www.ams.org/mathscinet-getitem?mr=#1}{#2}
}
\providecommand{\href}[2]{#2}

\end{document}